\begin{document}

\newtheorem{Theorem}{Theorem}[section]
\newtheorem{Corollary}[Theorem]{Corollary}
\newtheorem{Proposition}[Theorem]{Proposition}
\newtheorem{Lemma}[Theorem]{Lemma}
\newtheorem{Assumptions}[Theorem]{Assumptions}
\newtheorem{definition}{Definition}[section]

\theoremstyle{definition}
\newtheorem{Remark}[Theorem]{Remark}

\title{\bf Stochastic evolution equations with Wick-polynomial nonlinearities}

\author{
Tijana~Levajkovi\'c\thanks{University of Belgrade, Serbia, t.levajkovic@sf.bg.ac.rs} ,
Stevan~Pilipovi\'c\thanks{Faculty of Sciences, University of Novi Sad, Serbia, stevan.pilipovic@dmi.uns.ac.rs} ,
Dora~Sele\v si\thanks{Faculty of Sciences, University of Novi Sad, Serbia, dora.selesi@dmi.uns.ac.rs} ,
Milica \v Zigi\'c\thanks{Faculty of Sciences, University of Novi Sad, Serbia, milica.zigic@dmi.uns.ac.rs}
	} 
\date{}
\maketitle


\bigskip

\begin{abstract}
We study nonlinear parabolic stochastic partial differential equations with Wick-power and Wick-polynomial type nonlinearities set in the
framework of white noise analysis. These equations include the stochastic Fujita equation, the stochastic Fisher-KPP equation and the stochastic FitzHugh-Nagumo equation among many others. By implementing the theory of $C_0-$semigroups and evolution systems into the chaos expansion theory in infinite dimensional spaces, we  prove existence and uniqueness of solutions for this class of SPDEs. 
In particular, we also treat the linear nonautonomous case and provide several applications featured as stochastic reaction-diffusion equations that arise in biology, medicine and physics.
\end{abstract}


\section{Introduction}
We study stochastic nonlinear evolution equations of the form  
\begin{align}\label{PNLJ}
u_t (t, \omega)&= \mathbf A \, u(t, \omega) + \sum_{k=0}^n a_k u^{\lozenge k}(t, \omega)+f(t,\omega), \quad t\in (0,T]\\\nonumber
u(0,\omega) &= u^0(\omega), \quad \omega\in\Omega,
\end{align} 
where $u(t,\omega)$ is an $X-$valued generalized stochastic process; $X$ is a certain  Banach algebra and $\mathbf A$ corresponds to a densely defined infinitesimal  generator of a $C_0-$semigroup. 
The nonlinear part is given in terms of Wick-powers $u^{\lozenge n} = u^{\lozenge n-1} \lozenge u=u\lozenge \dots \lozenge u,\;n\in \mathbb{N}$, where $\lozenge$ denotes the Wick product.  The Wick product is involved due to the fact that we allow random terms to be present both in the initial condition $u_0$
and the driving force $f$. This leads to singular solutions that do not allow to use ordinary multiplication, but require a renormalization of the multiplication, which is done by introducing the Wick product into the equation. The Wick product is known to represent the highest order stochastic approximation of the ordinary product \cite{Mikulevicius}. 

In our previous paper \cite{Milica} we treated the case of linear stochastic parabolic equations with Wick-multipli\-ca\-ti\-ve noise which includes the case $n=1.$ The present paper is an extension of \cite{Milica} to nonlinear equations, where the nonlinearity is generated by a Wick-polynomial function leading to stochastic versions of Fujita-type equations $u_t = \mathbf A u + u^{\lozenge n}+f$, FitzHugh-Nagumo equations  $u_t = \mathbf A u + u^{\lozenge 2} - u^{\lozenge 3}+f$, Fisher-KPP equations $u_t = \mathbf A u + u - u^{\lozenge 2}+f$ and Chaffee-Infante equations  $u_t = \mathbf A u + u^{\lozenge 3} - u+f$. These equations have found ample applications in ecology, medicine, engineering and physics. For example, the FitzHugh-Nagumo equation is used to study electrical activity of neurons in neurophysiology by modeling the conduction of electric impulses down a nerve axon. The Fisher-Kolmogorov-Petrovsky-Piskunov equation provides a model for the spread of an epidemic in a population or for the distribution of an advantageous gene within a
population. Other applications in medicine involve the modeling of cellular reactions to the introduction of toxins, and the process of epidermal wound healing. In plasma physics it has been used to study  neutron flux in nuclear reactors, while in ecology it models flame propagation of fire outbreaks. Thus, the study of their stochastic versions, when some of the input factors is disturbed by an external noise factor and hence it becomes randomized, is of immense importance. For instance, a stochastic version of the FitzHugh-Nagumo equation has been studied in \cite{Albeverio} and \cite{Barbu}, while the stochastic Fisher-KPP equation has been studied in \cite{HuangLiu} and \cite{OksendalVZ}. 

 We implement the Wiener-It\^o chaos expansion method combined with the operator semigroup theory in order to prove the existence and the uniqueness of a solution for \eqref{PNLJ}. Using the chaos expansion method any SPDE can be transformed into a lower triangular infinite system of PDEs (also known as the propagator system) that can be solved recursively. Solving this system, one obtains the coefficients of the solution to \eqref{PNLJ}. In order to solve the propagator system, we exploit the intrinsic relationship between the Wick product and the Catalan numbers that was discovered in \cite{KL} where the authors considered the stochastic Burgers equation. We build upon these ideas in order to solve a general class of stochastic nonlinear equations \eqref{PNLJ}.

The plan of exposition is as follows: In the introductory section we recall upon basic notions of $C_0-$semigroups, evolution systems and white noise theory including chaos expansions of generalized stochastic processes. In Section 2, which represents the main part of the paper, we prove existence and uniqueness of the solution to \eqref{PNLJ} for the case when $a_0=a_1=\cdots=a_{n-1}=0$ and $a_n=1$. This normalization is made for technical simplicity to illustrate the method of solving and to put out in details all building blocks of the formulae involved. In Section 3 we treat the general case of \eqref{PNLJ} and provide some concrete examples.

\subsection{Evolution systems}

We fix the notation and recall some known facts about evolution systems (see \cite[Chapter 5]{Pazy}). Let $X$ be a Banach space. Let $\{A(t)\}_{t\in[s,T]}$ be a family of linear operators in $X$  such that $A(t):D(A(t))\subset X\to X,\;t\in[s,T]$ and let $f$ be an $X-$valued function $f:[s,T]\to X.$  Consider the initial value problem 
\begin{align}\label{EP}
\frac{d}{dt}u(t)&=A(t)u(t)+f(t),\quad 0\leq s<t\leq T,\\
u(s)&=x.\nonumber
\end{align}
If $u\in C([s,T],X)\cap C^1((s,T],X),$ $u(t)\in D(A(t))$ for all $t\in (s,T]$ and $u$ satisfies \eqref{EP}, then $u$ is a classical solution of \eqref{EP}.

A two parameter family of bounded linear operators $S(t,s),\;0\leq s\leq t\leq T$ on X is called an evolution system if the following two conditions are satisfied:
\begin{enumerate}
\item $S(s,s)=I$ and $S(t,r)S(r,s)=S(t,s),\quad 0\leq s\leq r\leq t\leq T$
\item $(t,s)\mapsto S(t,s)$ is strongly continuous for all $0\leq s\leq t\leq T.$
\end{enumerate}
Clearly, if $S(t,s)$ is an evolution system associated with the homogeneous evolution problem \eqref{EP}, i.e. if $f\equiv 0,$ then a classical solution of \eqref{EP} is given by
$
u(t)=S(t,s)x,\ t\in[s,T].
$

A family $\{A(t)\}_{t\in[s,T]}$ of infinitesimal generators of $C_0-$semigroups on $X$ is called stable if there exist constants $m\geq 1$ and $w\in\mathbb{R}$ (stability constants) such that $(w,\infty)\subseteq \rho(A(t)),\; t\in[s,T]$ and
$$\Big\|\prod_{j=1}^kR(\lambda:A(t_j))\Big\|\leq\frac{m}{(\lambda-w)^k},\quad \lambda>w,$$
 for every finite sequence $0\leq s\leq t_1\leq t_2\leq\dots\leq t_k\leq T,\;k=1,2,\dots.$

Let $\{A(t)\}_{t\in [s,T]}$ be a stable family of infinitesimal generators with stability constants $m$ and $w$. Let $B(t),\;t\in[s,T],$ be a family of bounded linear operators on $X$. If $\|B(t)\|\leq M,\;t\in [s, T],$ then $\{A(t)+B(t)\}_{t\in[s,T]}$ is a stable family of infinitesimal generators with stability constants $m$ and $w+Mm.$

Let $\{A(t)\}_{t\in [s,T]}$ be a stable family of infinitesimal generators of $C_0-$semigroups on $X$ such that the domain  $D(A(t))=D$ is independent of $t$ and for every $x\in D,$ $A(t)x$ is continuously differentiable in $X.$ If $f\in C^1([s,T],X)$ then for every $x\in D$ the evolution problem \eqref{EP} has a unique classical solution $u$ given by
$$u(t)=S(t,s)x+\int_s^t S(t,r)f(r)dr,\quad 0\leq s\leq t\leq T.$$
From the proof of \cite[Theorem 5.3, p. 147]{Pazy} one can obtain
\begin{align*}
\frac{d}{dt}u(t)= A(t)S(t,s)x+A(t)\int_s^t S(t,r)f(r)dr+f(t),\quad s<t\leq T.
\end{align*}
Since $t\mapsto A(t)$ is continuous in $B(D,X)$ and $(t,s)\mapsto S(t,s)$ is strongly continuous for all $0\leq s\leq t\leq T,$ we have additionally that the solution $u$ to \eqref{EP} exhibits the regularity property $u\in C^1([s,T],X)$ and $\frac{d}{dt}u(t)|_{t=s}=A(s)x+f(s).$
Recall that the evolution system $S(t,s)$ satisfies:
\begin{enumerate}
\item $\|S(t,s)\|\leq me^{w(t-s)},\ 0\leq s\leq t\leq T;$
\item $\frac{\partial^+}{\partial t}S(t,s)x\Big|_{t=s}=A(s)x,\  x\in D,\  0\leq s\leq T$ 
which implies that $\frac{\partial}{\partial t}S(t,s)x=A(t)S(t,s)x$ since $t\mapsto A(t)$ is continuous in $B(D,X)$;
\item $\frac{\partial}{\partial s}S(t,s)x=-S(t,s)A(s)x,\  x\in D,\  0\leq s\leq t\leq T;$
\item $S(t,s)D\subseteq D;$
\item $S(t,s)x$ is continuous in $D$ for all $0\leq s\leq t\leq T$ and $x\in D.$
\end{enumerate}

\begin{Remark}
Considering infinitezimal generators depending on $t,$ we follow the standard  approach of Yosida (cf. \cite{51}, \cite{23}). We refer to \cite{NZ} for a method
based on an equivalent  operator extension problem (see also references in \cite{NZ}). The chaos expansion approach, which is the essence of our paper,
requires the existence results for the propagator system i.e. for the coordinate-wise deterministic Cauchy problems. For this purpose we demonstrate the applications of the hyperbolic Cauchy problem given in \cite{Pazy}.
\end{Remark}

\subsection{Generalized stochastic processes}\label{0.2}

Denote by $(\Omega, \mathcal{F}, \mu) $ the Gaussian white noise
probability space $(S'(\mathbb{R}), \mathcal{B}, \mu), $ where
$\Omega=S'(\mathbb{R})$ denotes the space of tempered distributions,
$\mathcal{B}$ the Borel sigma-algebra generated by the weak topology
on $S'(\mathbb{R})$ and $\mu$ the Gaussian white noise measure corresponding to  the
  characteristic function
\begin{equation*}\label{BM theorem}
\int_{S'(\mathbb{R})} \,  e^{{i\langle\omega, \phi\rangle}}
d\mu(\omega) = \exp \left [-\frac{1}{2} \|\phi\|^2_{L^2(\mathbb{R})}\right], \quad \quad\phi\in  S(\mathbb{R}),
\end{equation*}
given by the Bochner-Minlos theorem.

We recall the notions related to $L^2(\Omega,\mu)$ (see \cite{HOUZ}).
The set of multi-indices $\mathcal I$ is $(\mathbb N_0^\mathbb N)_c$,
i.e. the set of sequences of non-negative integers which have only
finitely many nonzero components. Especially, we denote by $\mathbf 0=(0,0,0,\ldots)$ the zero multi-index with all entries equal to zero, the length of a multi-index is $|\alpha|=\sum_{i=1}^\infty\alpha_i$ for $\alpha=(\alpha_1,\alpha_2,\ldots)\in\mathcal I$ and $\alpha!=\prod_{i=1}^\infty\alpha_i!.$ We will use the convention that $\alpha-\beta$ is defined if $\alpha_n-\beta_n\geq 0$ for all $n\in\mathbb N$, i.e., if $\alpha-\beta\geq\mathbf 0.$

The
Wiener-It\^o theorem (sometimes also referred to as the
Cameron-Martin theorem) states that one can define an orthogonal
basis $\{H_\alpha\}_{\alpha\in\mathcal I}$ of $L^2(\Omega,\mu)$,
where $H_\alpha$ are constructed by  means of Hermite orthogonal
polynomials $h_n$ and Hermite functions $\xi_n$,
$$H_\alpha(\omega)=\prod_{n=1} ^\infty h_{\alpha_n}(\langle\omega,\xi_n\rangle),\quad \alpha=(\alpha_1,\alpha_2,\ldots, \alpha_n\ldots)\in\mathcal I,\quad \omega\in\Omega.$$
Then, every $F\in L^2(\Omega,\mu)$ can be represented via the so
called \emph{chaos expansion}
$$F(\omega)=\sum_{\alpha\in\mathcal I} f_\alpha H_\alpha(\omega), \quad \omega\in S'(\mathbb{R}),\quad\sum_{\alpha\in\mathcal I} |f_\alpha|^2\alpha!<\infty,\quad f_\alpha\in\mathbb{R},\quad\alpha\in\mathcal I.$$

Denote by $\varepsilon_k=(0,0,\ldots, 1, 0,0,\ldots),\;k\in \mathbb{N}$ the
multi-index with the entry 1 at the $k$th place. Denote by $\mathcal
H_1$ the subspace of $L^2(\Omega,\mu)$, spanned by the polynomials
$H_{\varepsilon_k}(\cdot)$, $k\in\mathbb N$. The subspace $\mathcal
H_1$ contains Gaussian stochastic processes, e.g. Brownian motion is
given by the chaos expansion $B(t,\omega) = \sum_{k=1}^\infty
\int_0^t \xi_k(s)ds\;H_{\varepsilon_k}(\omega).$

Denote by $\mathcal H_m$ the $m$th order chaos space, i.e. the
closure in $L^2(\Omega,\mu)$ of the linear subspace spanned by the orthogonal polynomials
$H_\alpha(\cdot)$ with $|\alpha|=m$, $m\in\mathbb N_0$. Then the
Wiener-It\^o chaos expansion states that
$L^2(\Omega,\mu)=\bigoplus_{m=0}^\infty \mathcal H_m$, where
$\mathcal H_0$ is the set of constants in $L^2(\Omega,\mu)$.


Changing the topology on $L^2(\Omega,\mu)$ to a weaker one, T. Hida
\cite{Hida} defined spaces of generalized random variables
containing the white noise as a weak derivative of the Brownian
motion. We refer to \cite{Hida}, \cite{HOUZ} for white noise
analysis.

Let $(2\mathbb N)^{\alpha}=\prod_{n=1}^\infty (2n)^{\alpha_n},\quad
\alpha=(\alpha_1,\alpha_2,\ldots, \alpha_n,\ldots)\in\mathcal I.$ We
will often use the fact that the series $\sum_{\alpha\in\mathcal I}(2\mathbb N)^{-p\alpha}$ converges  for $p>1$. Using the same technique as in \cite[Chapter 2]{HOUZ} one can define Banach spaces $(S)_{\rho,p}$ of test functions and their topological duals $(S)_{-\rho,-p}$ of stochastic distributions for all $\rho\geq 0$ and $p\geq 0.$

\begin{definition} The stochastic test function spaces are defined by
$$(S)_{\rho,p} =\{F=\sum_{\alpha\in\mathcal I}f_\alpha {H_\alpha}\in L^2(\Omega,\mu):\;  \|F\|^2_{(S)_{\rho,p}}= \sum_{\alpha\in\mathcal I}(\alpha!)^{1+\rho} |f_\alpha|^2(2\mathbb N)^{p\alpha}<\infty\},$$
for all $\rho\geq 0,\;p\geq 0.$\\
Their topological duals, the stochastic distribution spaces, are given by formal sums:
$$(S)_{-\rho,-p} =\{F=\sum_{\alpha\in\mathcal I}f_\alpha {H_\alpha}:\;  \|F\|^2_{(S)_{-\rho,-p}}= \sum_{\alpha\in\mathcal I}(\alpha!)^{1-\rho}|f_\alpha|^2(2\mathbb N)^{-p\alpha}<\infty\},$$
for all $\rho\geq 0,\;p\geq 0.$\\
The space of test random variables is $(S)_{\rho} =\bigcap_{p\geq 0}(S)_{\rho,p},\;\rho\geq 0$ endowed with the projective topology.\\
Its dual, the space of generalized random variables is $(S)_{-\rho}
=\bigcup_{p\geq 0}(S)_{-\rho,-p},\;\rho\geq 0$ endowed with the inductive
topology.
\end{definition}

The action of  $F=\sum_{\alpha\in\mathcal{I}}  b_\alpha H_\alpha\in (S)_{-\rho}$ onto $f=\sum_{\alpha\in\mathcal{I}}c_\alpha H_\alpha\in(S)_{\rho}$ is given by $\langle F,f\rangle=\sum_{\alpha\in\mathcal{I}}(b_\alpha,c_\alpha)\alpha!,$ where $(b_\alpha,c_\alpha)$ stands for the inner product in $\mathbb{R}.$ Thus, they form a Gelfand triplet
$$(S)_{\rho} \subseteq L^2(\Omega,\mu) \subseteq
(S)_{-\rho},\quad \rho\geq 0.$$ 

Clearly,  the spaces $(S)_{\rho,p}$ and $(S)_{-\rho,-p}$ are separable Hilbert spaces. Moreover, $(S)_{\rho}$ and $(S)_{-\rho}$ are nuclear spaces.

For $\rho=0$ we obtain the space of Hida stochastic distributions $(S)_{-0}$ and for $\rho=1$ the Kondratiev space of generalized random variables $(S)_{-1}.$ It holds that
$$(S)_1\hookrightarrow (S)_{0} \hookrightarrow L^2(\Omega,\mu) \hookrightarrow
(S)_{-0}\hookrightarrow (S)_{-1},$$
where $\hookrightarrow$ denotes dense inclusions. Usually the values of $\rho$ are restricted to $\rho\in[0,1]$ in order to establish the $S-$transform (see \cite{Hida}, \cite{HOUZ}) when solving SPDEs, but in our case values $\rho>1$ may be considered as well.

The time-derivative of  the Brownian motion $B(t,\omega)=\sum_{k=1}^\infty \int_{0}^t\xi_k(s)ds\;H_{\varepsilon_k}(\omega)$ exists in a generalized
sense and belongs to the Kondratiev space $(S)_{-1,-p}$ for
$p\geq\frac5{12}$. We refer it as the \emph{white noise}
and its formal expansion is given by
 $W(t,\omega) =
\sum_{k=1}^\infty \xi_k(t)H_{\varepsilon_k}(\omega).$

We extended in \cite{GRPW} the definition of stochastic processes to processes with the chaos expansion form
$U(t,\omega)=\sum_{\alpha\in\mathcal I}u_\alpha(t)
{H_\alpha}(\omega)$, where the coefficients $u_\alpha$ are elements
of some Banach space of functions $X$. We say that $U$ is an \emph{$X$-valued generalized stochastic process}, i.e. $U(t,\omega)\in X\otimes (S)_{-\rho}$ if there exists $p\geq 0$ such that $\|U\|_{X\otimes(S)_{-\rho,-p}}^2=\sum_{\alpha\in\mathcal I}(\alpha!)^{1-\rho}\|u_\alpha\|_X^2(2\mathbb N)^{-p\alpha}<\infty$.

For example, let $X=C^k[0,T]$, $k\in\mathbb N$. We have proved in \cite{ps} that the differentiation of a stochastic process can be carried
out componentwise in the chaos expansion, i.e. due to the fact that
$(S)_{-\rho}$ is a nuclear space it holds that
$C^k([0,T],(S)_{-\rho})=C^k[0,T]\hat\otimes(S)_{-\rho}$ where $\hat{\otimes}$ denotes the completion of the tensor product which is the same for the $\varepsilon-$completion  and $\pi-$completion. In the sequel, we will use the notation $\otimes$ instead of $\hat\otimes$.  Hence $C^k[0,T]\otimes (S)_{-\rho,-p}$ and $C^k[0,T]\otimes (S)_{\rho,p}$ denote subspaces of the corresponding completions. We keep the same notation when $C^k[0,T]$ is replaced by another Banach space. This means that a
stochastic process $U(t,\omega)$ is $k$ times continuously
differentiable if and only if all of its coefficients $u_\alpha(t)$,
$\alpha\in\mathcal I$ are in $C^k[0,T]$.

The same holds for Banach space valued stochastic processes i.e.
elements of  $C^k([0,T],X)\otimes(S)_{-\rho}$, where $X$ is an
arbitrary Banach space. It holds that
$$C^k([0,T],X\otimes
(S)_{-\rho})=C^k([0,T],X)\otimes(S)_{-\rho}=\bigcup_{p\geq 0}C^k([0,T],X)\otimes
(S)_{-\rho,-p}.$$

In addition, if $X$ is a Banach algebra, then the \emph{Wick product} of the stochastic
processes $F=\sum_{\alpha\in\mathcal I}f_\alpha H_\alpha$ and $G=\sum_{\beta\in\mathcal I}g_\beta H_\beta\in X\otimes(S)_{-\rho,-p}$
is given by  $$F\lozenge G = \sum_{\gamma\in\mathcal
	I}\sum_{\alpha+\beta=\gamma}f_\alpha g_\beta H_\gamma =
\sum_{\alpha\in\mathcal I}\sum_{\beta\leq \alpha} f_\beta
g_{\alpha-\beta} H_\alpha,$$ and $F\lozenge G\in X\otimes(S)_{-\rho,-(p+k)}$ for all $k>1$ (see \cite{HOUZ}). The $n$th Wick power is defined by
$F^{\lozenge n}=F^{\lozenge (n-1)}\lozenge F$, $F^{\lozenge 0}=1$.
Note that
$H_{n\varepsilon_k}=H_{\varepsilon_k}^{\lozenge n}$ for $n\in\mathbb
N_0$, $k\in\mathbb N$. Throughout the paper we will assume that $X$ is a Banach algebra.

\section{Stochastic nonlinear evolution equation of Fujita-type} 

First we consider the equation \eqref{PNLJ}, with $a_0=a_1=\cdots=a_{n-1}=0$ and $a_n=1$, i.e. the equation:

\begin{align}\label{NLJ}
u_t (t, \omega)&= \mathbf A \, u(t, \omega) + u^{\lozenge n}(t, \omega)+f(t,\omega), \quad t\in (0,T]\\\nonumber
u(0,\omega) &= u^0(\omega), \quad \omega\in\Omega.
\end{align} 

 Let $\mathbf A: \mathbb{D}\subset X\otimes (S)_{-1} \to X\otimes (S)_{-1}$ be a coordinatewise operator that corresponds to a family of deterministic operators $A_\alpha: \, D_\alpha\subset X \to X$, $\alpha\in \mathcal{I}$ 
 $$\mathbf{A}u(t,\omega)=\mathbf{A}\left(\sum_{\alpha\in \mathcal I} u_\alpha(t) \, H_\alpha(\omega)\right)=\sum_{\alpha\in \mathcal I} A_\alpha u_\alpha(t) \,\,  H_\alpha(\omega),\quad u\in \mathbb{D},$$
 (see \cite[Section 2]{Milica}). We are looking for a solution of \eqref{NLJ} as an $X$-valued stochastic process $u(t)\in X\otimes(S)_{-1},\;t\in[0,T]$ represented in the form 
 	\begin{equation}
 	\label{proces}
 	u(t,\omega)= \sum_{\alpha\in \mathcal I} u_\alpha(t) \,\,  H_\alpha(\omega),\quad t\in[0,T],\quad \omega\in \Omega.
 	\end{equation}
 The chaos expansion representation of the Wick-square is given by
 \begin{align}\label{WP2}
 u^{\lozenge 2}(t,\omega)&= \sum_{\alpha \in \mathcal I} \Big( \sum_{\gamma\leq \alpha } \, u_\gamma(t) \,\, u_{\alpha-\gamma} (t)\Big) \, H_\alpha(\omega)\\\nonumber
 &=u^2_{\mathbf{0}}(t)\,H_{\mathbf{0}}(\omega)+\sum_{|\alpha|>0} \Big(2u_{\mathbf{0}}(t)\,u_\alpha(t)+ \sum_{0<\gamma< \alpha } \, u_\gamma(t) \,\, u_{\alpha-\gamma} (t)\Big) \, H_\alpha(\omega),
 \end{align}
 where $t\in[0,T],\;\omega\in\Omega.$
Let $u^{\lozenge m}_\gamma(t)$, $\gamma\in \mathcal I$, $m\in \mathbb N$ denote the coefficients of the chaos expansion of the $m$th Wick power, i.e.  $u^{\lozenge m}(t,\omega) = \sum_{\gamma\in \mathcal I} u^{\lozenge m}_\gamma(t) H_\gamma (\omega)$, for $m\in \mathbb N$. Then, for arbitrary $n\in \mathbb{N},$ it can be shown that the $n$th Wick-power is given by
 \begin{align*}\label{WPn}
 &u^{\lozenge n}(t,\omega)=u^{\lozenge n-1}(t,\omega)\lozenge u(t,\omega) =\sum_{\alpha \in \mathcal I} \Big( \sum_{\gamma\leq \alpha } \, u^{\lozenge n-1}_\gamma(t) \,\, u_{\alpha-\gamma} (t)\Big) \, H_\alpha(\omega)\\\nonumber
 &=u^n_{\mathbf{0}}(t)\,H_{\mathbf{0}}(\omega)+\sum_{|\alpha|>0} \Bigg(\binom{n}{1}u_{\mathbf{0}}^{n-1}(t)\,u_\alpha(t)+\binom{n}{2}u_{\mathbf{0}}^{n-2} \sum_{0<\gamma_1< \alpha } \, u_{\alpha-\gamma_1}(t) \,\, u_{\gamma_1} (t)\\\nonumber
 &+ \binom{n}{3}u_{\mathbf{0}}^{n-3} \sum_{0<\gamma_1< \alpha } \sum_{0<\gamma_2< \gamma_1 }\, u_{\alpha-\gamma_1}(t) \,\, u_{\gamma_1-\gamma_2} (t) u_{\gamma_2}(t)+\dots+\\\nonumber
 &+ \binom{n}{n}\sum_{0<\gamma_1< \alpha } \sum_{0<\gamma_2< \gamma_1 }\dots \sum_{0<\gamma_{n-1}< \gamma_{n-2} }\, u_{\alpha-\gamma_1}(t) \,\, u_{\gamma_1-\gamma_2} (t)\dots  u_{\gamma_{n-2}-\gamma_{n-1}}(t)u_{\gamma_{n-1}}(t)\Bigg) \, H_\alpha(\omega)\\\nonumber
 &= u^n_{\mathbf{0}}(t)\,H_{\mathbf{0}}(\omega)+\sum_{|\alpha|>0} \Bigg(n\,u_{\mathbf{0}}^{n-1}(t)\,u_\alpha(t)+r_{\alpha,n} (t)\Bigg) \, H_\alpha(\omega),
 \end{align*}
where $t\in[0,T],\;\omega\in\Omega.$ The functions $r_{\alpha,n}(t),\;t\in[0,T],\;\alpha\in\mathcal{I},\;n>1$ contain only the coordinate functions $u_\beta,\;\beta< \alpha.$ Moreover, we recall that the Wick power $u^{\lozenge n}$ of a stochastic process $u\in X\otimes (S)_{-1, -p}$ is an element of $X\otimes (S)_{-1, -q}$, for $q> p+n-1,$ see \cite{HOUZ}.
 	
 	We rewrite all processes that figure in \eqref{NLJ} in their corresponding  Wiener-It\^o chaos expansion form and obtain 
 	\[\begin{split}\sum_{\alpha \in \mathcal I} \,\, \,\, \frac{d}{dt}u_\alpha(t) \,\, H_\alpha(\omega) &= \sum_{\alpha \in \mathcal I} A_\alpha u_\alpha(t) \, \, H_\alpha(\omega) +  \sum_{\alpha \in\mathcal{I}} \Big( \sum_{\gamma \leq  \alpha } \, u^{\lozenge n-1}_\gamma(t) \,\, u_{\alpha-\gamma} (t)\Big) \, H_\alpha(\omega) \\
 	&+\sum_{\alpha \in \mathcal I} f_\alpha(t) \, \, H_\alpha(\omega)  \\
 	\sum_{\alpha \in \mathcal I} u_\alpha(0) \ H_\alpha(\omega) & = \sum_{\alpha \in \mathcal I} u^0_\alpha \ H_\alpha(\omega).
 	\end{split}\] 
 Due to the orthogonality of the base $H_\alpha$ this reduces to the system of infinitely many deterministic Cauchy problems: 
 	\begin{enumerate}
 		\item[$1^\circ$] for $\alpha =\mathbf{0}$ 
 	\begin{equation}
 	\label{nelinearna det}
 	\frac{d}{dt} u_{\mathbf{0}} (t) =  A_{\mathbf{0}} u_{\mathbf{0}}  (t) +   u^n_{\mathbf{0}} (t) +f_\mathbf{0}(t), \quad u_{\mathbf{0}}(0) = u_{\mathbf{0}}^0,  \qquad \text{and}
 	\end{equation}
 	\item[$2^\circ$] for  $\alpha >\mathbf{0}$
 	\begin{equation}
 	\label{sistem 2}
 	\frac{d}{dt}u_\alpha (t)=  \big( A_\alpha +  n\,u^{n-1}_{\mathbf{0}} (t) \,Id \big) \, u_\alpha(t)  +  r_{\alpha,n} (t) + f_\alpha(t), \quad  
 	u_\alpha (0) =  u_\alpha^0 .
 	\end{equation} with $t\in (0,T]$ and $\omega\in\Omega$. 
 	\end{enumerate}
 	Let
 	\[B_{\alpha,n}(t) = A_\alpha +  n\,u^{n-1}_{\mathbf{0}}(t)\, Id \qquad  \text{and} \qquad g_{\alpha,n} (t)= r_{\alpha,n} (t) + f_\alpha(t), \quad t\in[0,T]\] for all $\alpha > \mathbf{0}$. Then, the system \eqref{sistem 2} 
 	can be written in the form 
 	\begin{equation}
 	\label{sistem 3}
 	\frac{d}{dt}u_\alpha (t) =  B_{\alpha,n}(t)  \, u_\alpha(t) +  g_{\alpha,n}(t), \quad t\in(0,T];\qquad  
 	u_\alpha (0) =  u_\alpha^0 .  
 	\end{equation}
 	 Note that the inhomogeneous part $g_{\alpha,n}$ in \eqref{sistem 3} does not contain any of the functions $u_\beta,\;\beta<\alpha$ for $|\alpha|=1$, while for $|\alpha|>1$ it involves also $u_\beta,\;\beta<\alpha$. Hence, we distinguish these two cases.
 \begin{enumerate}
 	\item[(a)] 
Let $|\alpha|=1$, i.e. $\alpha=\varepsilon_k$, $k\in \mathbb N.$ Then $g_{\varepsilon_k,n} =f_{\varepsilon_k}$, $k\in \mathbb N$ and thus \eqref{sistem 3} transforms to 
\begin{equation}
\label{det jed duzina 1}
\frac{d}{dt}u_{\varepsilon_k} (t)=  B_{\varepsilon_k,n}(t)  \, u_{\varepsilon_k} (t) + f_{\varepsilon_k}(t),\quad t\in(0,T];  \qquad  
u_{\varepsilon_k} (0) =  u_{\varepsilon_k}^0.
\end{equation}
\item[(b)] Let $|\alpha|>1.$ Then 
\begin{equation}
\nonumber
\frac{d}{dt}u_\alpha (t) =  B_{\alpha,n}(t)  \, u_\alpha(t) +  g_{\alpha,n}(t), \quad t\in(0,T];\qquad  
u_\alpha (0) =  u_\alpha^0.
\end{equation}
\end{enumerate}

Each solution $u$ to \eqref{NLJ} can be represented in the form \eqref{proces} and hence its coefficients $u_{\mathbf{0}} $ and $u_\alpha$  for  $\alpha> \mathbf{0}$ must satisfy   \eqref{nelinearna det} and  \eqref{sistem 3} respectively. Vice versa, if the coefficients $u_{\mathbf{0}} $ and $u_\alpha$  for  $\alpha> \mathbf{0}$ solve \eqref{nelinearna det} and  \eqref{sistem 3} respectively, and if the series in \eqref{proces} represented by these coefficients exists in $X\otimes (S)_{-1}$, then it defines a solution to \eqref{NLJ}.

\begin{definition} An $X-$valued generalized stochastic process $u(t)=\sum_{\alpha\in\mathcal{I}}u_\alpha(t)H_\alpha\in X\otimes (S)_{-1},\;t\in[0,T]$ is a coordinatewise classical solution to \eqref{NLJ} if $u_\mathbf{0}$ is a classical solution to \eqref{nelinearna det} and for every $\alpha\in\mathcal{I}\setminus\{\mathbf{0}\},$ the coefficient $u_\alpha$ is a classical solution to \eqref{sistem 3}. The coordinatewise solution $u(t)\in X\otimes (S)_{-1},\;t\in[0,T]$ is an almost classical solution to \eqref{NLJ} if $u\in C([0,T],X)\otimes (S)_{-1}.$ 
	An almost classical solution is a classical solution if $u\in C([0,T],X)\otimes (S)_{-1} \cap C^1((0,T],X)\otimes (S)_{-1}$.
\end{definition}

We assume that the following hold:
 	\begin{enumerate}
 		\item[(A1)] The operators  $A_\alpha,\;\alpha\in \mathcal{I},$ are infinitesimal generators of $C_0-$semigroups $\{T_\alpha (s)\}_{s\geq 0}$ with a common domain $D_{\alpha}=D,\;\alpha\in \mathcal{I},$ dense in $X.$ We assume that there exist constants $m\geq 1$ and $w\in \mathbb{R}$ such that
 		\begin{equation*} \|T_\alpha(s)\|\leq me^{w s},\;s\geq 0 \quad\mbox{for all}\quad\alpha\in \mathcal{I}.
 		\end{equation*}
 		 The action of  $\mathbf A$ is given by 
 		\[\mathbf A(u)=\sum_{\alpha\in\mathcal I}A_\alpha(u_\alpha)H_\alpha,\] for $u \in \mathbb{D}\subseteq D\otimes (S)_{-1}$ of the form \eqref{proces},  where
 		\[\mathbb{D}=\Big\{u=\sum_{\alpha\in\mathcal I}u_\alpha \, \,  H_\alpha\in
 		D\otimes (S)_{-1}:\;\exists p_0\geq 0,\; \sum_{\alpha\in\mathcal
 			I}\|A_\alpha(u_\alpha)\|^2_X(2\mathbb N)^{-p_0\alpha}<\infty\Big\}.\]
 			
 		 \item[(A2)] The initial value $u^0=\sum_{\alpha\in\mathcal{I}}u^0_\alpha H_\alpha\in\mathbb{D}$, i.e. $u_\alpha^0\in D$ for every $\alpha\in\mathcal{I}$ and there exists $p\geq 0$ such that 
 		\begin{equation*}\label{uslov1.1}
 		\sum_{\alpha\in \mathcal{I}}\|u_\alpha^0\|_X^2 (2\mathbb N)^{-p \alpha}<\infty, 
 		\end{equation*} 
 	\begin{equation*}\label{uslov1.2}
 	\sum_{\alpha\in \mathcal{I}}\|A_\alpha (u_\alpha^0)\|_X^2 (2\mathbb N)^{-p \alpha}<\infty.
 	\end{equation*}
 		
 		\item[(A3)] The inhomogeneous part $f(t,\omega)=\sum_{\alpha\in\mathcal{I}}f_\alpha(t)H_\alpha(\omega),\;t\in[0,T],\;\omega\in \Omega$ belongs to $ C^1([0,T],X)\otimes(S)_{-1};$ hence $t\mapsto f_\alpha(t)\in C^1([0,T],X),\;\alpha\in \mathcal{I}$ and there exists $p\geq 0$ such that
   $$\sum_{\alpha\in \mathcal{I}}\|f_\alpha\|^2_{C^1([0,T],X)}(2\mathbb{N})^{-p\alpha}=\sum_{\alpha\in \mathcal{I}}\Big(\sup_{t\in[0,T]}\|f_\alpha(t)\|_X+\sup_{t\in[0,T]}\|f'_\alpha(t)\|_X\Big)^2(2\mathbb{N})^{-p\alpha}<\infty.$$
 			
		\item[(A4-n)] The Cauchy problem
$$\frac{d}{dt} u_{\mathbf{0}} (t) =  A_{\mathbf{0}} u_{\mathbf{0}}  (t) +   u^{n}_{\mathbf{0}} (t) + f_\mathbf{0}(t) ,\quad t\in(0,T]; \quad u_{\mathbf{0}}(0) = u_{\mathbf{0}}^0,$$
has a classical solution $u_{\mathbf{0}}\in C^1([0,T],X).$ 
	\end{enumerate}

\begin{Remark}
Particularly, if $A_{\mathbf 0} = \Delta$ is  the Laplace operator and $f_{\mathbf{0}}\equiv 0$, then   \eqref{nelinearna det}  belongs to the class of Fujita equations  
\begin{equation}
\label{Fujita eq}
u_t= \Delta u + u^p, \quad u(0)=u_0,
\end{equation}studied by Fujita, Chen and Watanabe \cite{Fujita1, Fujita2}.  The authors proved that for a nonnegative initial condition $u^0\in C(\mathbb R^N)\cap L^\infty(\mathbb R^N)$,  equation \eqref{Fujita eq}  has a unique classical solution on some $[0,T_1)$. Moreover, if $p> 1+ \frac2{N}$ then there exist a positive bounded solution. The Fujita equation \eqref{Fujita eq} apart from an interest per se  also acts as a scaling limit of more general superlinear
equations whose nonlinearities exhibit a polynomial growth rate. Originally, it has been developed to describe molecular concentration of a solution subjected to centrifugation and sedimentation.
\end{Remark}

\begin{Remark}
In general, equations of the form \eqref{nelinearna det}, i.e. the deterministic equation for $\alpha = \mathbf{0}$ can be solved  by the Fixed Point Theorem \cite{FixP}.  Thus, in order to check if condition (A4-n) holds, one has to apply fixed point methods or other established methods for deterministic PDEs. The solution to \eqref{nelinearna det} will usually blow-up in finite time. Especially the description of blow-up in the Sobolev supercritical regime poses a challenge that has been tackled in several papers (e.g. \cite{Fujita2}, \cite{Meneses} for the Fujita equation). We stress that our equation \eqref{NLJ} and hence also \eqref{nelinearna det} is given on a finite time interval, which is assumed to provide a solution on the entire interval (we restrict our considerations form the very start to the interval where no blow-up appears).
\end{Remark}

Now we focus on solving \eqref{sistem 3} for $\alpha > \mathbf{0}$. 

\begin{Lemma}\label{Lema} Let the assumptions (A1)-(A4-n) be fulfilled. Then for every $\alpha>\mathbf{0}$ the evolution system \eqref{sistem 3} has a unique classical solution $u_\alpha\in C^1([0,T],X).$ 
\end{Lemma}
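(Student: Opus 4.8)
The plan is to read each equation in \eqref{sistem 3} as a linear nonautonomous Cauchy problem driven by a bounded perturbation of the fixed generator $A_\alpha$, and then to invoke the evolution-system machinery recalled in the preliminaries on evolution systems. The argument proceeds by induction on the length $|\alpha|$, exploiting the triangular structure of the propagator: since $r_{\alpha,n}$ involves only the coefficients $u_\beta$ with $\beta<\alpha$, once these are known and sufficiently regular the equation for $u_\alpha$ becomes a genuine \emph{linear} inhomogeneous evolution equation with a prescribed forcing term.

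First I would fix $\alpha>\mathbf 0$ and isolate the perturbation $P_\alpha(t):=n\,u_{\mathbf 0}^{n-1}(t)\,Id$, i.e. multiplication by the Banach-algebra element $n\,u_{\mathbf 0}^{n-1}(t)\in X$, so that $B_{\alpha,n}(t)=A_\alpha+P_\alpha(t)$. By (A4-n) we have $u_{\mathbf 0}\in C^1([0,T],X)$; since $X$ is a Banach algebra the map $t\mapsto u_{\mathbf 0}^{n-1}(t)$ is again $C^1$, and by submultiplicativity $\|P_\alpha(t)\|\le n\,\|u_{\mathbf 0}(t)\|^{n-1}\le M$ with $M:=n\sup_{t\in[0,T]}\|u_{\mathbf 0}(t)\|^{n-1}<\infty$, finite because $[0,T]$ is compact. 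Hence each $P_\alpha(t)$ is a bounded operator and $t\mapsto P_\alpha(t)$ is uniformly bounded and continuously differentiable in operator norm.

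Next I would verify that $\{B_{\alpha,n}(t)\}_{t\in[0,T]}$ is a stable family of generators. The constant family $\{A_\alpha\}_{t\in[0,T]}$ is stable with constants $m,w$: by (A1) the bound $\|T_\alpha(s)\|\le m e^{ws}$ yields the Hille--Yosida resolvent estimates $\|R(\lambda:A_\alpha)^k\|\le m(\lambda-w)^{-k}$ for $\lambda>w$, and since every $t_j$ produces the same operator the product $\prod_{j=1}^k R(\lambda:A_\alpha)$ equals $R(\lambda:A_\alpha)^k$, so the stability inequality holds. Adding the uniformly bounded family $P_\alpha(t)$ and applying the bounded-perturbation statement recalled in the preliminaries, $\{B_{\alpha,n}(t)\}_{t\in[0,T]}$ is stable with constants $m$ and $w+Mm$. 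Moreover $D(B_{\alpha,n}(t))=D(A_\alpha)=D$ is independent of $t$, and for $x\in D$ the map $t\mapsto B_{\alpha,n}(t)x=A_\alpha x+n\,u_{\mathbf 0}^{n-1}(t)\,x$ is continuously differentiable in $X$ because $t\mapsto u_{\mathbf 0}^{n-1}(t)$ is $C^1$ and Banach-algebra multiplication is continuous.

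Finally, the induction on $|\alpha|$ supplies a $C^1$ forcing term. For $|\alpha|=1$ we have $g_{\varepsilon_k,n}=f_{\varepsilon_k}\in C^1([0,T],X)$ by (A3). For $|\alpha|>1$ the term $r_{\alpha,n}$ is a finite sum of products of coefficients $u_\beta$ with $\beta<\alpha$, hence $|\beta|<|\alpha|$; by the inductive hypothesis each such $u_\beta\in C^1([0,T],X)$, and since products of $C^1$ maps into a Banach algebra are again $C^1$, we obtain $r_{\alpha,n}\in C^1([0,T],X)$ and therefore $g_{\alpha,n}=r_{\alpha,n}+f_\alpha\in C^1([0,T],X)$. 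With $u_\alpha^0\in D$ guaranteed by (A2), all hypotheses of the evolution-system existence theorem are in force, so \eqref{sistem 3} has a unique classical solution $u_\alpha(t)=S_\alpha(t,0)u_\alpha^0+\int_0^t S_\alpha(t,r)\,g_{\alpha,n}(r)\,dr$, where $S_\alpha$ is the evolution system generated by $\{B_{\alpha,n}(t)\}$; the accompanying regularity remark (continuity of $t\mapsto B_{\alpha,n}(t)$ in $B(D,X)$ together with strong continuity of $S_\alpha$) upgrades this to $u_\alpha\in C^1([0,T],X)$, closing the induction. The step I expect to be most delicate is the bookkeeping of the induction: one must confirm that $r_{\alpha,n}$ genuinely contains only strictly smaller indices (so the recursion is well founded) and that the Banach-algebra product preserves $C^1$-regularity, which is precisely what legitimises $g_{\alpha,n}$ as a $C^1$ forcing term at each stage.
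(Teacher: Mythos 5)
Your proposal is correct and follows essentially the same route as the paper: treat $B_{\alpha,n}(t)=A_\alpha+n\,u_{\mathbf 0}^{n-1}(t)\,Id$ as a uniformly bounded perturbation of the stable constant family $\{A_\alpha\}$, verify the common domain and the $C^1$-dependence of $t\mapsto B_{\alpha,n}(t)x$ via (A4-n), and then induct on $|\alpha|$ to supply a $C^1$ forcing term so that the Pazy evolution-system theorem yields the unique classical solution by variation of constants. Your explicit remarks that the Hille--Yosida estimate gives stability of the constant family and that Banach-algebra products of $C^1$ maps remain $C^1$ are details the paper leaves implicit, but the argument is the same.
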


\begin{proof}
First, for every $\alpha> \mathbf{0},$ we consider the family of operators $B_{\alpha,n}(t) =  A_\alpha + n\, u^{n-1}_{\mathbf{0}}(t) Id,\;t\in[0,T].$  According to assumption (A1), the constant family $\{A_\alpha(t)\}_{t\in[0,T]}=\{A_\alpha\}_{t\in[0,T]}$ is a stable family of infinitesimal generators of a $C_0-$semigroup $\{T_\alpha(s)\}_{s\geq 0}$ on $X$ satisfying $\|T_\alpha(s)\|\leq me^{w s}$  with stability constants $m\geq 1$ and $w\in\mathbb{R}.$ Let 
\begin{equation}\label{ocena u^0}
M_n=\sup_{t\in[0,T]}\|u_0(t)\|_X.
\end{equation} 
The perturbation $n\,u^{n-1}_\mathbf{0}(t)Id:X\to X,\;t\in[0,T]$ is a family of uniformly bounded linear operators such that
$$\|n\,u^{n-1}_\mathbf{0}(t)x\|_X=\|n\,u^{n-1}_\mathbf{0}(t)\|_X\|x\|_X\leq \sup_{t\in[0,T]}n\,\|u_\mathbf{0}(t)\|^{n-1}_X\|x\|_X\leq nM_n^{n-1}\|x\|_X,$$
for all $x\in X,\; t\in[0,T],$ i.e. $\|n\,u^{n-1}_\mathbf{0}(t)Id\|\leq nM_n^{n-1},\;t\in [0,T].$ Thus, for every $\alpha>\mathbf{0},$ the family $\{A_\alpha+n\,u^{n-1}_{\mathbf{0}}(t)Id\}_{t\in[0,T]}$ is a stable family of infinitesimal generators with stability constants $m$ and $w+nM_n^{n-1}m.$ By assumption (A4-n) the function $u_\mathbf{0}\in C^1([0,T],X)$ so we obtain continuous differentiability of  $(A_\alpha+n\,u^{n-1}_\mathbf{0}(t)Id)x,\;t\in[0,T]$ for every $x\in D$ and for every $\alpha>\mathbf{0}.$ Additionally, the domain of the operators $n\,u^{n-1}_\mathbf{0}(t)Id$ is the entire space $X$ which implies that all of the operators $B_{\alpha,n}(t),\;t\in[0,T]$ have a common domain $D(B_{\alpha,n}(t))=D(A_\alpha)=D$ not depending on $t.$ Notice here that assumption (A1) additionally provides the same domain $D$ of the family $\{B_{\alpha,n}(t)\}_{t\in[0,T]}$ for all $\alpha>\mathbf{0}.$

Finally, one can associate the unique {\it evolution system} $ S_{\alpha,n}(t,s)$,  for $0 \leq s \leq t\leq T$ for all $\alpha> \mathbf{0}$ to the system \eqref{sistem 3} such that 
\begin{equation}
\label{ocena S_alpha}
\|S_{\alpha,n}(t,s)\| \leq  me^{w_n \, (t-s)} \leq me^{w_n (T-s)}, \quad  0 \leq s \leq t\leq T, \quad \alpha> \mathbf{0},
\end{equation}
where $w_n=w+nM_n^{n-1}m$ see \cite[Thm 4.8., p. 145]{Pazy}. Without loss of generality we may assume that $w>0$ and thus will be $w_n>0$.

Now one can solve the infinite system of the Cauchy problems \eqref{sistem 3} by induction on the length of the multiindex $\alpha$. Let $|\alpha|=1.$ Since $f_{\varepsilon_k}\in C^1([0,T],X),$ we obtain the unique classical  solution $u_{\varepsilon_k}\in C^1([0,T],X)$ to \eqref{det jed duzina 1} given by
\begin{equation}
\label{sol hom}
u_{\varepsilon_k} (t)= S_{\varepsilon_k,n} (t,0) \, u_{\varepsilon_k}^0 + \int_0^t \, S_{\varepsilon_k,n} (t,s) \, f_{\varepsilon_k}(s) \, ds, \quad t\in [0,T].
\end{equation}
Now let for every $\beta\in \mathcal{I}$ such that $\mathbf{0}<\beta<\alpha$ the unique classical solution of \eqref{sistem 3} satisfy $u_{\beta}\in C^1([0,T],X).$ Then for fixed $|\alpha|>1$ the inhomogeneous part $g_{\alpha,n}\in C^1([0,T],X)$
and the solution to \eqref{sistem 3}  is of the form
\begin{equation}
\label{sol nehom}
u_{\alpha}  (t) = S_{\alpha,n} (t,0) \, u_{\alpha}^0   + \int_0^t \, S_{\alpha,n} (t,s) \, g_{\alpha,n}(s) \, ds , \quad t\in [0,T],
\end{equation}
where $u_{\alpha}\in C^1([0,T],X).$ For more details see \cite[Thm 5.3., p. 147]{Pazy}. 
\end{proof}

Now we proceed with four technical lemmas that will be used in the sequel.

\begin{Lemma} \label{ocena faktorijela} Let $\alpha\in \mathcal{I}.$ Then 
 \begin{align*}
  \frac{|\alpha|!}{\alpha!}&\leq (2\mathbb{N})^{2\alpha}.
 \end{align*}
\end{Lemma}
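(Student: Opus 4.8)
The plan is to avoid any induction on the length $|\alpha|$ or on the number of nonzero components of $\alpha$. Such an induction is tempting, but it fails: bounding the multinomial coefficient $\frac{|\alpha|!}{\alpha!}$ factor by factor (e.g.\ through the telescoping product $\prod_j\binom{s_j}{\alpha_{n_j}}$ of partial sums) loses too much, because the enormous slack created at a coordinate carrying most of the mass cannot be transferred to a coordinate carrying little mass but sitting at a large index. Instead I would read the bound off from a single application of the multinomial theorem combined with the convergence of $\sum_n (2n)^{-2}$.

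First I would fix the power $m=|\alpha|$ and apply the multinomial expansion to the nonnegative, summable sequence $x_n=(2n)^{-2}$. Since $x_n\ge 0$ and $\sum_{n=1}^\infty x_n<\infty$, the terms may be rearranged freely and one has the identity $\big(\sum_{n=1}^\infty x_n\big)^m=\sum_{\beta\in\mathcal I,\,|\beta|=m}\frac{m!}{\beta!}\prod_{n=1}^\infty x_n^{\beta_n}$, in which every summand on the right is nonnegative; with $x_n=(2n)^{-2}$ the monomial $\prod_n x_n^{\beta_n}$ is precisely $(2\mathbb N)^{-2\beta}$. Next I would single out the term indexed by $\beta=\alpha$. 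As all summands are nonnegative, this one term is dominated by the whole sum, which gives $\frac{|\alpha|!}{\alpha!}(2\mathbb N)^{-2\alpha}\le\big(\sum_{n=1}^\infty (2n)^{-2}\big)^{|\alpha|}$. It then remains only to control the base: $\sum_{n=1}^\infty (2n)^{-2}=\tfrac14\sum_{n=1}^\infty n^{-2}=\tfrac{\pi^2}{24}<1$, where for an elementary estimate it already suffices to note $\sum_{n\ge 1}n^{-2}<1+\sum_{n\ge 2}\frac{1}{n(n-1)}=2$, hence $\sum_n (2n)^{-2}<\tfrac12$. Therefore $\big(\sum_n (2n)^{-2}\big)^{|\alpha|}\le 1$ for every $\alpha\in\mathcal I$, and multiplying through by $(2\mathbb N)^{2\alpha}$ yields $\frac{|\alpha|!}{\alpha!}\le(2\mathbb N)^{2\alpha}$; the case $\alpha=\mathbf 0$ is the trivial equality $1=1$.

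The only point genuinely needing care, though it is mild, is the justification of the infinite multinomial expansion: one must check that raising the convergent positive series $\sum_n x_n$ to the integer power $m$ and collecting equal monomials is legitimate, which follows from absolute convergence (iterated Cauchy product, or Tonelli for nonnegative terms). Everything else is bookkeeping, and it is worth stressing that the exponent $2\alpha$ in the statement — rather than $\alpha$ — is exactly what forces the decisive constant $\sum_{n\ge 1}(2n)^{-2}$ below $1$, which is what makes this one-line argument go through.
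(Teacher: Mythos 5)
Your proof is correct, and it is genuinely self-contained where the paper's is not: the paper disposes of this lemma in one line by invoking \cite[Proposition 2.3]{KL}, which asserts $|\alpha|!\leq \mathbf q^\alpha\alpha!$ for any sequence $\mathbf q=(q_k)$ with $1<q_1\leq q_2\leq\dots$ and $\sum_k 1/q_k<1$, and then merely checks the hypothesis $\sum_k(2k)^{-2}=\pi^2/24<1$ for $q_k=(2k)^2$. You instead derive the inequality directly from the infinite multinomial expansion of $\bigl(\sum_n(2n)^{-2}\bigr)^{|\alpha|}$, isolating the single nonnegative term indexed by $\alpha$; the justification via Tonelli (or the iterated Cauchy product for nonnegative terms) is exactly the point that needs saying, and you say it. The decisive numerical input, $\pi^2/24<1$, is identical in both arguments, and your elementary telescoping bound $\sum_n n^{-2}<2$ even removes the dependence on the exact value of $\zeta(2)$. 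What your route buys is transparency and a slightly stronger statement: you never use the monotonicity $q_1\leq q_2\leq\cdots$ assumed in \cite{KL}, only positivity and $\sum_k 1/q_k\leq 1$. The opening paragraph arguing that a coordinate-wise induction must fail is not needed for correctness and could be dropped, but nothing in the argument itself is in error.
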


\begin{proof} This is a direct consequence of \cite[Proposition 2.3]{KL}. More precisely, in \cite{KL} authors proved that $|\alpha|!\leq \mathbf{q}^\alpha \alpha!$ if a sequence $\mathbf{q}=(q_k)_{k\in \mathbb{N}}$ satisfies 
$$1<q_1\leq q_2\leq \dots\quad \mbox{and}\quad \sum_{k=1}^\infty\frac{1}{q_k}<1.$$ 
Since $\displaystyle\sum_{k=1}^\infty\frac{1}{(2k)^2}=\frac{\pi^2}{24}<1,$ the sequence $(2\mathbb{N})^2=((2k)^2)_{k\in\mathbb{N}}$ satisfies a required property. 
\end{proof}

\begin{Lemma} \label{lema ocena} For every $c>0$ there exists $q>1$ such that the following holds
\begin{align*}
\sum_{\alpha\in\mathcal{I}}c^{|\alpha|}(2\mathbb{N})^{-q\alpha}<\infty.
\end{align*}
\end{Lemma}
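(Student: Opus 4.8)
The plan is to choose $q$ large enough so that the factor $(2\mathbb N)^{-q\alpha}$ overwhelms the growth $c^{|\alpha|}$, and then to factor the resulting series over the independent coordinates of the multi-index $\alpha$. First I would reduce the mixed sum to a pure power sum: since $c^{|\alpha|} = \prod_{k=1}^\infty c^{\alpha_k}$ and $(2\mathbb N)^{-q\alpha} = \prod_{k=1}^\infty (2k)^{-q\alpha_k}$, the summand splits as a product over $k$. Because $\alpha \in \mathcal I$ has only finitely many nonzero entries, the sum over all $\alpha \in \mathcal I$ becomes the (formal) infinite product of one-dimensional geometric sums,
\begin{equation*}
\sum_{\alpha\in\mathcal{I}}c^{|\alpha|}(2\mathbb{N})^{-q\alpha}
=\prod_{k=1}^\infty\Big(\sum_{j=0}^\infty \big(c\,(2k)^{-q}\big)^{j}\Big)
=\prod_{k=1}^\infty\frac{1}{1-c\,(2k)^{-q}},
\end{equation*}
valid provided $c\,(2k)^{-q}<1$ for every $k$.

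Next I would secure the convergence conditions by choosing $q$. The ratio $c\,(2k)^{-q}$ is largest at $k=1$, namely $c\,2^{-q}$, so I would pick $q>1$ large enough that $c\,2^{-q}<1$, i.e. $q>\log_2 c$ (together with $q>1$); then $c\,(2k)^{-q}\le c\,2^{-q}<1$ for all $k$, so every geometric series converges and every factor in the product is a positive finite number.

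The main point, and the only genuine obstacle, is then the convergence of the infinite product, which is equivalent to the convergence of $\sum_{k=1}^\infty c\,(2k)^{-q}$. This is where I would spend the argument: since $q>1$, the series $\sum_{k=1}^\infty (2k)^{-q}=2^{-q}\sum_{k=1}^\infty k^{-q}$ converges (it is a constant multiple of a convergent $p$-series, and the convergence of $\sum_\alpha (2\mathbb N)^{-q\alpha}$ for $q>1$ was already recalled in the excerpt), hence $\sum_{k=1}^\infty c\,(2k)^{-q}<\infty$. By the standard criterion that $\prod_k (1-x_k)^{-1}$ converges iff $\sum_k x_k<\infty$ (for $0\le x_k<1$), the infinite product above converges to a finite value, which proves $\sum_{\alpha\in\mathcal{I}}c^{|\alpha|}(2\mathbb{N})^{-q\alpha}<\infty$. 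I would note that enlarging $q$ if necessary to simultaneously satisfy $q>1$ and $c\,2^{-q}<1$ causes no difficulty, completing the proof.
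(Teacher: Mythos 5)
Your proof is correct, but it takes a different route from the paper's. The paper's argument is a one-line reduction: choose $s\geq 0$ with $c\leq 2^s$, so that $c^{|\alpha|}=\prod_i c^{\alpha_i}\leq \prod_i(2i)^{s\alpha_i}=(2\mathbb{N})^{s\alpha}$, and hence the sum is dominated by $\sum_{\alpha\in\mathcal{I}}(2\mathbb{N})^{(s-q)\alpha}$, which is finite for $q>s+1$ by the fact, recalled earlier in the paper, that $\sum_{\alpha\in\mathcal{I}}(2\mathbb{N})^{-p\alpha}<\infty$ for $p>1$. You instead factor the sum over the coordinates into the infinite product $\prod_{k}\bigl(1-c(2k)^{-q}\bigr)^{-1}$ and invoke the convergence criterion for infinite products together with the $p$-series test; this is essentially the standard proof of the very fact the paper cites, adapted to carry the constant $c$ inside each geometric factor. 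Both arguments are valid. The paper's version is shorter because it absorbs $c^{|\alpha|}$ into the weight $(2\mathbb{N})^{s\alpha}$ and delegates the convergence to a known result, yielding the explicit threshold $q>s+1$; yours is self-contained and makes the mechanism of convergence transparent, at the cost of the extra bookkeeping needed to ensure $c(2k)^{-q}<1$ for every $k$ and to justify the product--sum equivalence. One small remark: the factorization identity $\sum_{\alpha\in\mathcal{I}}\prod_k x_k^{\alpha_k}=\prod_k(1-x_k)^{-1}$ for $0\leq x_k<1$ deserves a word of justification (monotone limits of the partial products over finitely many coordinates), but since all terms are nonnegative this is routine.
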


\begin{proof} 
Let $c>0$ and choose $s\geq 0$ such that $c\leq 2^s.$ Then, for $q>s+1,$
\begin{equation*}
\sum_{\alpha\in\mathcal{I}}c^{|\alpha|}(2\mathbb{N})^{-q\alpha}\leq \sum_{\alpha\in\mathcal{I}}\prod_{i=1}^\infty (2^s)^{\alpha_i}\prod_{i=1}^{\infty}(2 i)^{-q\alpha_i}\leq
\sum_{\alpha\in\mathcal{I}} \prod_{i=1}^\infty (2i)^{(s-q)\alpha_i}=\sum_{\alpha\in\mathcal{I}}(2\mathbb{N})^{(s-q)\alpha}<\infty.\qedhere
\end{equation*} 
\end{proof}

In the next lemma, for the sake of completeness, we give some useful properties of the well known Catalan numbers, see for example \cite{cat}.

\begin{Lemma}\label{katalan}
A sequence $\{\mathbf{c}_n\}_{n\in \mathbb N}$  defined by the recurrence relation
\begin{equation}
\label{Catalan}
\mathbf{c}_0=1, \quad 
\mathbf{c}_n=  \sum_{k=0}^{n-1} \, \mathbf{c}_k  \, \mathbf{c}_{n-1-k},  
\quad n\geq 1 
\end{equation}
is called  the sequence of Catalan numbers.  The closed formula for $\mathbf{c}_n$ is a multiple of the binomial coefficient, i.e.  the solution of the Catalan recurrence \eqref{Catalan} is 
\begin{equation*}\label{Catalan1}
\mathbf{c}_n= \frac1{n+1} \, \binom{2n}{n}\quad\mbox{or}\quad \mathbf{c}_n = \binom{2n}{n} - \binom{2n}{n+1}.
\end{equation*}
The Catalan numbers satisfy the growth estimate
\begin{equation}\label{Cat ocena}
\mathbf{c}_n\leq 4^n,\;n\geq 0.
\end{equation}
\end{Lemma}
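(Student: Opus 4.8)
The plan is to establish the closed formula by the generating-function method and then to read off both the alternative binomial expression and the growth estimate as elementary consequences. First I would introduce the ordinary generating function $C(x)=\sum_{n\geq 0}\mathbf{c}_n x^n$, treated initially as a formal power series (its radius of convergence, which turns out to be $1/4$, can be checked a posteriori). The convolution structure of the recurrence \eqref{Catalan} is precisely the coefficient form of a product of two copies of $C$: multiplying $\mathbf{c}_n=\sum_{k=0}^{n-1}\mathbf{c}_k\mathbf{c}_{n-1-k}$ by $x^n$ and summing over $n\geq 1$ yields the functional equation $C(x)=1+x\,C(x)^2$, the additive $1$ recording the initial value $\mathbf{c}_0=1$. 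Solving this quadratic in $C(x)$ gives $C(x)=\frac{1-\sqrt{1-4x}}{2x}$, the sign being forced by the requirement $C(0)=\mathbf{c}_0=1$, since the other root blows up as $x\to 0$.

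The key step is the extraction of the coefficients. Using the generalized binomial theorem to expand $(1-4x)^{1/2}=\sum_{n\geq 0}\binom{1/2}{n}(-4)^n x^n$, I would simplify the generalized binomial coefficient via the reduction $\binom{1/2}{n}=\frac{(-1)^{n-1}}{4^n(2n-1)}\binom{2n}{n}$, which follows by writing the falling factorial $(1/2)(1/2-1)\cdots(1/2-n+1)$, factoring out the powers of $2$, and collecting signs; this gives $\binom{1/2}{n}(-4)^n=-\frac{1}{2n-1}\binom{2n}{n}$. Substituting into $C(x)=\frac{1-\sqrt{1-4x}}{2x}$ and shifting the summation index by one produces $\mathbf{c}_n=\frac{1}{2(2n+1)}\binom{2n+2}{n+1}$, which a one-line factorial cancellation rewrites as $\mathbf{c}_n=\frac{1}{n+1}\binom{2n}{n}$.

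The remaining two claims are then immediate. For the equivalent form, the quotient $\binom{2n}{n+1}/\binom{2n}{n}=\frac{n}{n+1}$ (again a factorial cancellation) gives $\binom{2n}{n}-\binom{2n}{n+1}=\binom{2n}{n}\bigl(1-\tfrac{n}{n+1}\bigr)=\frac{1}{n+1}\binom{2n}{n}$, matching the closed formula. For the growth estimate \eqref{Cat ocena}, I would use the crude bound $\binom{2n}{n}\leq\sum_{k=0}^{2n}\binom{2n}{k}=2^{2n}=4^n$, whence $\mathbf{c}_n=\frac{1}{n+1}\binom{2n}{n}\leq 4^n$.

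The only genuinely delicate point is the coefficient identity $\binom{1/2}{n}(-4)^n=-\frac{1}{2n-1}\binom{2n}{n}$; everything else is routine cancellation. An alternative that avoids generating functions is a direct induction verifying that $\frac{1}{n+1}\binom{2n}{n}$ satisfies \eqref{Catalan}, but this needs a Catalan convolution identity that is itself most naturally proved by the generating-function computation above, so I would keep the generating-function route as the primary argument. I also note that the naive induction $\mathbf{c}_n=\sum_{k=0}^{n-1}\mathbf{c}_k\mathbf{c}_{n-1-k}\leq\sum_{k=0}^{n-1}4^k4^{n-1-k}=n\,4^{n-1}$ fails to close for the bound \eqref{Cat ocena} once $n\geq 5$, which is exactly why I would derive \eqref{Cat ocena} from the closed formula rather than directly from the recurrence.
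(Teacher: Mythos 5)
Your proof is correct. Note, however, that the paper gives no proof of this lemma at all: it is stated ``for the sake of completeness'' with a pointer to the reference \cite{cat}, so there is nothing to compare against line by line. Your generating-function argument --- deriving $C(x)=1+xC(x)^2$ from the convolution recurrence, selecting the root $C(x)=\frac{1-\sqrt{1-4x}}{2x}$ via $C(0)=1$, and extracting coefficients through the identity $\binom{1/2}{n}(-4)^n=-\frac{1}{2n-1}\binom{2n}{n}$ --- is the standard derivation and all the computations check out, including the index shift giving $\mathbf{c}_n=\frac{1}{2(2n+1)}\binom{2n+2}{n+1}=\frac{1}{n+1}\binom{2n}{n}$, the equivalence with $\binom{2n}{n}-\binom{2n}{n+1}$, and the bound $\mathbf{c}_n\leq\binom{2n}{n}\leq 4^n$. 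Your closing observation that the crude induction $\mathbf{c}_n\leq n\,4^{n-1}$ does not close, so that \eqref{Cat ocena} genuinely requires the closed formula (or a sharper induction), is a worthwhile point that the paper's bare citation glosses over.
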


\begin{Lemma} \label{multi katalan}\cite[p.21]{KL}
Let $\{R_\alpha:\;\alpha\in \mathcal{I}\}$ be a set of real numbers such that $R_\mathbf{0}=0,\;R_{\varepsilon_k},\; k\in\mathbb{N}$ are given and 
$$R_\alpha=\sum_{\mathbf{0}<\gamma<\alpha}R_\gamma R_{\alpha-\gamma},\quad |\alpha|>1.$$
Then
$$R_\alpha=\frac{1}{|\alpha|}\binom{2|\alpha|-2}{|\alpha|-1} \frac{|\alpha|!}{\alpha!}\prod_{k=1}^\infty R_{\varepsilon_k}^{\alpha_k},\quad |\alpha|>1.$$
\end{Lemma}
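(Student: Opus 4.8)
The plan is to establish the closed formula by induction on the length $n=|\alpha|$, after first rewriting the prefactor as a Catalan number. By Lemma \ref{katalan} we have $\frac{1}{|\alpha|}\binom{2|\alpha|-2}{|\alpha|-1}=\mathbf{c}_{|\alpha|-1}$, so the target identity reads $R_\alpha=\mathbf{c}_{|\alpha|-1}\frac{|\alpha|!}{\alpha!}\prod_{k=1}^\infty R_{\varepsilon_k}^{\alpha_k}$. I first note that this expression already reproduces the seed values when $|\alpha|=1$: for $\alpha=\varepsilon_k$ one has $\mathbf{c}_0=1$, $\varepsilon_k!=1$, and the product collapses to $R_{\varepsilon_k}$. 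Hence the given data furnish the base of the induction, and the formula may be regarded as valid for every $\beta$ with $|\beta|\ge 1$.

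For the inductive step I would fix $\alpha$ with $n=|\alpha|\ge 2$ and assume the formula for all $\beta$ with $1\le|\beta|<n$. Every $\gamma$ occurring in the recurrence satisfies $1\le|\gamma|\le n-1$ and $1\le|\alpha-\gamma|\le n-1$, so the induction hypothesis applies to both $R_\gamma$ and $R_{\alpha-\gamma}$. Since $\gamma_k+(\alpha-\gamma)_k=\alpha_k$ for each $k$, the factor $\prod_k R_{\varepsilon_k}^{\alpha_k}$ is independent of $\gamma$ and pulls out of the sum, leaving $R_\alpha=\big(\prod_k R_{\varepsilon_k}^{\alpha_k}\big)\,\Sigma$, where $\Sigma=\sum_{\mathbf{0}<\gamma<\alpha}\mathbf{c}_{|\gamma|-1}\,\mathbf{c}_{|\alpha-\gamma|-1}\,\frac{|\gamma|!\,|\alpha-\gamma|!}{\gamma!\,(\alpha-\gamma)!}$. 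It then remains to prove $\Sigma=\mathbf{c}_{n-1}\frac{n!}{\alpha!}$.

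The key step is to regroup $\Sigma$ according to the value $j=|\gamma|$, which runs over $1,\dots,n-1$ (the constraints $\gamma\ne\mathbf{0}$ and $\gamma\ne\alpha$ being automatic once $1\le j\le n-1$). As the Catalan factors depend only on $j$, this gives $\Sigma=\sum_{j=1}^{n-1}\mathbf{c}_{j-1}\,\mathbf{c}_{n-j-1}\,I_j$ with $I_j=\sum_{\gamma\le\alpha,\,|\gamma|=j}\frac{j!}{\gamma!}\frac{(n-j)!}{(\alpha-\gamma)!}$. I expect the evaluation of $I_j$ to be the crux of the argument, and for this I would invoke the multinomial Vandermonde convolution: comparing the coefficient of $x^\alpha$ on the two sides of $(x_1+x_2+\cdots)^n=(x_1+x_2+\cdots)^j\,(x_1+x_2+\cdots)^{n-j}$ through the multinomial theorem yields $\frac{n!}{\alpha!}=\sum_{\gamma\le\alpha,\,|\gamma|=j}\frac{j!}{\gamma!}\frac{(n-j)!}{(\alpha-\gamma)!}=I_j$, a value independent of $j$.

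Factoring out this common value gives $\Sigma=\frac{n!}{\alpha!}\sum_{j=1}^{n-1}\mathbf{c}_{j-1}\,\mathbf{c}_{n-j-1}$, and after the shift $i=j-1$ the remaining sum becomes $\sum_{i=0}^{n-2}\mathbf{c}_i\,\mathbf{c}_{(n-2)-i}=\mathbf{c}_{n-1}$ by the Catalan recurrence \eqref{Catalan}. Thus $\Sigma=\mathbf{c}_{n-1}\frac{n!}{\alpha!}$ and $R_\alpha=\mathbf{c}_{n-1}\frac{n!}{\alpha!}\prod_k R_{\varepsilon_k}^{\alpha_k}$, which is the claimed identity, completing the induction. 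The two essential ingredients are therefore the combinatorial evaluation of $I_j$ via the multinomial Vandermonde identity and the recognition of the defining Catalan recurrence in the outer sum over $j$.
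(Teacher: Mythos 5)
Your proof is correct, and it takes a genuinely different route from the paper's. The paper argues via generating functions: it sets $G(z)=\sum_{\beta}M_\beta z^\beta$, shows that the recurrence forces the quadratic relation $G^2(z)-G(z)+\sum_k M_{\varepsilon_k}z_k=0$, solves this to express $G$ through the Catalan generating function as $\sum_{n\ge1}\frac1n\binom{2n-2}{n-1}\bigl(\sum_k M_{\varepsilon_k}z_k\bigr)^n$, and then reads off the coefficient of $z^\beta$ by the multinomial theorem. You instead verify the stated closed form directly by induction on $|\alpha|$: after factoring out $\prod_k R_{\varepsilon_k}^{\alpha_k}$ you regroup the convolution by $j=|\gamma|$, evaluate the inner sum by the multinomial Vandermonde identity $\sum_{\gamma\le\alpha,\,|\gamma|=j}\frac{j!}{\gamma!}\frac{(n-j)!}{(\alpha-\gamma)!}=\frac{n!}{\alpha!}$ (correctly noting that the constraints $\gamma\ne\mathbf{0}$, $\gamma\ne\alpha$ are automatic for $1\le j\le n-1$), and close the outer sum with the defining Catalan recurrence \eqref{Catalan}. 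All steps check out. The generating-function route has the advantage of \emph{deriving} the closed formula rather than merely verifying it, at the cost of handling a formal power series identity and selecting the correct branch of the quadratic; your inductive argument is more elementary and self-contained, and it cleanly isolates the two combinatorial ingredients (Vandermonde convolution and the Catalan recurrence) that make the formula work.
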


\begin{proof}
Let $\alpha\in\mathcal{I},\;|\alpha|>1$ be  given. Then $\alpha=(\alpha_1,\dots,\alpha_d,0,0,\dots)$ has only finally many non-zero components, so one can associate to it a $d-$dimensional vector $(\alpha_1,\dots,\alpha_d)\in \mathbb{N}_0^d.$ Adopting the proof for the classical Catalan numbers, the authors in \cite{KL} considered the function $G(z)=\sum_{\beta\in \mathbb{N}_0^d}M_\beta z^\beta,\;z\in \mathbb{N}_0^d,$ where $M_\beta=\sum_{\mathbf{0}<\gamma<\beta}M_\gamma M_{\beta-\gamma}$ and $z^\beta=z_1^{\beta_1}\cdots z_d^{\beta_d}.$ The function $G$ satisfies $G^2(z)-G(z)+\sum_{k=1}^d M_{\varepsilon_k}z_k=0,$ which implies that $G(z)=\sum_{n=1}^\infty \frac{1}{n}\binom{2n-2}{n-1}\left(\sum_{k=1}^dM_{\varepsilon_k}z_k\right)^n.$ Finally, applying the multinomial formula $\left(\sum_{k=1}^dM_{\varepsilon_k}z_k\right)^n=\sum_{\beta\in \mathbb{N}_0^d,\;|\beta|=n}\frac{n!}{\beta!}\prod_{k=1}^d\left(M_{\varepsilon_k}z_k\right)^{\beta_k}$ one obtains
\begin{align*}
G(z)&=\sum_{\beta\in \mathbb{N}_0^d}M_\beta z^\beta=\sum_{n=1}^\infty\sum_{\beta\in \mathbb{N}_0^d,\;|\beta|=n} \frac{1}{n}\binom{2n-2}{n-1}\frac{n!}{\beta!}\prod_{k=1}^d M_{\varepsilon_k}^{\beta_k}\prod_{k=1}^d z_k^{\beta_k}\\
&=\sum_{\beta\in \mathbb{N}_0^d} \left(\frac{1}{|\beta|}\binom{2|\beta|-2}{|\beta|-1}\frac{|\beta|!}{\beta!}\prod_{k=1}^d M_{\varepsilon_k}^{\beta_k}\right)z^\beta.
\end{align*}
\end{proof}

\subsection{Proof of the main theorem}

The statement of the main theorem is as follows.

\begin{Theorem}\label{main}
Let the assumptions $(A1)-(A4-n)$ be fulfilled. 
Then there exists a unique almost classical solution $u\in C([0,T],X)\otimes (S)_{-1}$ to \eqref{NLJ}. 
\end{Theorem}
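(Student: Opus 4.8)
The plan is to construct the solution coordinatewise and then prove that the resulting random series converges in $C([0,T],X)\otimes(S)_{-1}$. The coordinatewise part is already available: assumption (A4-n) supplies the unique classical solution $u_\mathbf{0}\in C^1([0,T],X)$ of \eqref{nelinearna det}, and Lemma \ref{Lema} supplies, for each $\alpha>\mathbf{0}$, the unique classical solution $u_\alpha\in C^1([0,T],X)$ of \eqref{sistem 3} through the mild-solution formulas \eqref{sol hom} and \eqref{sol nehom}. Since every coordinate is uniquely determined by the propagator system, uniqueness of the process will follow automatically once convergence is settled; the entire difficulty therefore reduces to exhibiting a single weight $p\geq 0$ with $\sum_{\alpha\in\mathcal{I}}\|u_\alpha\|_{C([0,T],X)}^2\,(2\mathbb{N})^{-p\alpha}<\infty$, which is precisely the condition for $u\in C([0,T],X)\otimes(S)_{-1}$ when $\rho=1$.

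Next I would derive recursive norm estimates. Taking norms in \eqref{sol hom} and \eqref{sol nehom} and invoking the uniform evolution-system bound \eqref{ocena S_alpha}, namely $\|S_{\alpha,n}(t,s)\|\leq me^{w_n T}$, yields for $|\alpha|=1$ a bound on $u_{\varepsilon_k}$ in terms of $\|u_{\varepsilon_k}^0\|_X$ and $\sup_t\|f_{\varepsilon_k}(t)\|_X$ only, and for $|\alpha|>1$ a bound in terms of $\|u_\alpha^0\|_X$, $\sup_t\|f_\alpha(t)\|_X$ and $\sup_t\|r_{\alpha,n}(t)\|_X$. The remainder $r_{\alpha,n}$ is, up to the binomial factors $\binom{n}{j}$ and the powers $u_\mathbf{0}^{n-j}$ (each controlled by $M_n^{n-j}$ via \eqref{ocena u^0}), a sum of nested convolutions over $\mathbf{0}<\gamma_1<\cdots<\gamma_{j-1}<\alpha$ of the lower-order coefficients. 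I would therefore introduce a scalar majorant sequence $R_\alpha\geq\|u_\alpha\|_{C([0,T],X)}$, with $R_{\varepsilon_k}$ chosen to dominate the $|\alpha|=1$ bound, obeying the convolution-type recursion generated by $r_{\alpha,n}$.

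To pass from the recursion to an explicit growth bound I would deploy the Catalan machinery. In the quadratic situation the recursion is exactly $R_\alpha=\sum_{\mathbf{0}<\gamma<\alpha}R_\gamma R_{\alpha-\gamma}$, so Lemma \ref{multi katalan} gives the closed form $R_\alpha=\frac{1}{|\alpha|}\binom{2|\alpha|-2}{|\alpha|-1}\frac{|\alpha|!}{\alpha!}\prod_k R_{\varepsilon_k}^{\alpha_k}$, whose prefactor is the Catalan number $\mathbf{c}_{|\alpha|-1}\leq 4^{|\alpha|-1}$ by Lemma \ref{katalan}, while $\frac{|\alpha|!}{\alpha!}\leq(2\mathbb{N})^{2\alpha}$ by Lemma \ref{ocena faktorijela}. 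Squaring, bounding $\mathbf{c}_{|\alpha|-1}^2\leq 16^{|\alpha|}$ and $(|\alpha|!/\alpha!)^2\leq(2\mathbb{N})^{4\alpha}$, and using that the square-summability of the first-order coefficients inherited from (A2)--(A3) forces a pointwise polynomial bound $R_{\varepsilon_k}^2\leq C(2\mathbb{N})^{p_1\varepsilon_k}$ and hence $\prod_k R_{\varepsilon_k}^{2\alpha_k}\leq C^{|\alpha|}(2\mathbb{N})^{p_1\alpha}$, the tail collapses to $\sum_\alpha c^{|\alpha|}(2\mathbb{N})^{-(p-4-p_1)\alpha}$, which Lemma \ref{lema ocena} renders finite for $p$ large enough. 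This gives the desired weighted summability and hence $u\in C([0,T],X)\otimes(S)_{-1}$, an almost classical solution, with uniqueness inherited coordinatewise.

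The main obstacle is the general-$n$ nonlinearity. For $n>2$ the remainder $r_{\alpha,n}$ produces not a single quadratic convolution but a sum of $j$-fold nested convolutions for $j=2,\dots,n$, so the majorant satisfies a polynomial rather than a quadratic fixed-point relation; in generating-function terms the symbol $G$ solves $G=B+\sum_{j=2}^n c_j\,G^j$ instead of $G=B+c_2 G^2$, and Lemma \ref{multi katalan} does not apply verbatim. I expect to resolve this either by majorizing each higher convolution power so as to fall back onto a single Catalan-type recursion, or by comparing $G$ with the solution of a scalar algebraic equation of the same shape, whose Taylor coefficients still grow only exponentially in $|\alpha|$ and still carry the multinomial factor $\frac{|\alpha|!}{\alpha!}$; in either route Lemmas \ref{ocena faktorijela} and \ref{lema ocena} close the summation. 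The delicate point is controlling, uniformly in $\alpha$, the interplay of the $M_n$-powers, the binomial weights, and the depth of the nested sums.
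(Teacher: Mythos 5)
Your overall architecture coincides with the paper's: coordinatewise solvability via Lemma \ref{Lema}, the sup-norms $L_\alpha=\sup_t\|u_\alpha(t)\|_X$, a Catalan-type majorant closed by Lemmas \ref{multi katalan}, \ref{ocena faktorijela} and \ref{lema ocena}. But there are two places where what you write is not yet a proof. First, for $n=2$ the recursion you actually obtain from \eqref{sol nehom} and \eqref{ocena S_alpha} is not the pure convolution $L_\alpha\leq\sum_{\mathbf{0}<\gamma<\alpha}L_\gamma L_{\alpha-\gamma}$ but
$L_\alpha\leq m_2e^{w_2T}\bigl(K(2\mathbb{N})^{p\alpha}+\sum_{\mathbf{0}<\gamma<\alpha}L_\gamma L_{\alpha-\gamma}\bigr)$,
with a multiplicative constant $m_2e^{w_2T}\geq 1$ and an additive forcing term. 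A sequence $R_\alpha$ defined by the exact Catalan recursion $R_\alpha=\sum R_\gamma R_{\alpha-\gamma}$ does not dominate such an $L_\alpha$ by induction, so Lemma \ref{multi katalan} cannot be applied verbatim as you claim. The paper repairs this by splitting into the trivial case $L_\alpha\leq\sqrt{K}(2\mathbb{N})^{p\alpha}$ and otherwise passing to the rescaled quantities $\tilde L_\alpha=2m_2e^{w_2T}\bigl(L_\alpha/(\sqrt{K}(2\mathbb{N})^{p\alpha})+1\bigr)$, which are shown to satisfy $\tilde L_\alpha\leq\sum_{\mathbf{0}<\gamma<\alpha}\tilde L_\gamma\tilde L_{\alpha-\gamma}$; only then does the Catalan comparison apply. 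This normalization is a genuine step, not bookkeeping.

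Second, and more seriously, for general $n$ you correctly diagnose that the majorant satisfies a polynomial rather than quadratic fixed-point relation, but you leave its resolution as two unexecuted options. The paper's actual device is an induction on $n$ driven by a dichotomy on the quadratic convolution: for each $\alpha$ either $L_\alpha\leq\sum_{\mathbf{0}<\beta<\alpha}L_{\alpha-\beta}L_\beta$, in which case the quadratic Catalan machinery closes the estimate directly, or $L_\alpha>\sum_{\mathbf{0}<\beta<\alpha}L_{\alpha-\beta}L_\beta$, in which case applying the same strict inequality to every $\beta<\alpha$ bounds each $j$-fold nested convolution by the $2$-fold one (e.g.\ $\sum_{\beta}L_{\alpha-\beta}\sum_{\gamma}L_{\beta-\gamma}L_\gamma\leq\sum_\beta L_{\alpha-\beta}L_\beta$), absorbing all higher-order terms into a single quadratic recursion with modified constants $m_n'$, $K'$ and reducing the problem for $n$ to the already-settled case $n-1$. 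This is in the spirit of your first proposed route, but the mechanism that makes it work, and the verification that the modified constants still satisfy $K'<1$ so that Theorem \ref{T1}'s argument applies, are exactly the content you would still need to supply.
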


\begin{proof} The proof of Theorem \ref{main} will be given by induction with respect to $n\in \mathbb{N}$ in Theorems \ref{T1} and \ref{T3}.
We will prove in the first one that the statement of the main theorem holds for $n=2.$ Since it is technically pretty challenging to write down the proof of the inductive step for arbitrary $n\in \mathbb{N},$ in Theorem \ref{T3} the proof is given for $n=3$ by reducing the problem to the case $n=2.$ In the same way one can reduce the problem for arbitrary $n\in \mathbb{N}$ to the case $n-1.$
\end{proof}
First consider \eqref{NLJ} for $n=2,$ i.e.
\begin{align}\label{NLJ-2}
u_t (t, \omega)&= \mathbf A \, u(t, \omega) + u^{\lozenge 2}(t, \omega)+f(t,\omega), \quad t\in[0,T]\\\nonumber
u(0,\omega) &= u^0(\omega), 
\end{align} 
The chaos expansion representation of the Wick-square is given by \eqref{WP2}. 
Applying the Wiener-It\^o chaos expansion to the nonlinear stochastic equation \eqref{NLJ-2} one obtain
 	\[\begin{split}\sum_{\alpha \in \mathcal I} \,\, \,\, \frac{d}{dt}u_\alpha(t) \,\, H_\alpha(\omega) &= \sum_{\alpha \in \mathcal I} A_\alpha u_\alpha(t) \, \, H_\alpha(\omega) +  \sum_{\alpha \in\mathcal{I}} \Big( \sum_{\gamma \leq  \alpha } \, u_\gamma(t) \,\, u_{\alpha-\gamma} (t)\Big) \, H_\alpha(\omega) \\
 	&+\sum_{\alpha \in \mathcal I} f_\alpha(t) \, \, H_\alpha(\omega)  \\
 	\sum_{\alpha \in \mathcal I} u_\alpha(0) \ H_\alpha(\omega) & = \sum_{\alpha \in \mathcal I} u^0_\alpha \ H_\alpha(\omega).
 	\end{split}\] 
which reduces to the system of infinitely many deterministic Cauchy problems:
 	\begin{enumerate}
 		\item[$1^\circ$] for $\alpha =\mathbf{0}$ 
 	\begin{equation}
 	\label{nelinearna det-2}
 	\frac{d}{dt} u_{\mathbf{0}} (t) =  A_{\mathbf{0}} u_{\mathbf{0}}  (t) +   u^2_{\mathbf{0}} (t) +f_\mathbf{0}(t), \quad u_{\mathbf{0}}(0) = u_{\mathbf{0}}^0,  \qquad \text{and}
 	\end{equation}
 	\item[$2^\circ$] for  $\alpha >\mathbf{0}$
 	\begin{equation}
 	\label{sistem 2-2}
 	\frac{d}{dt}u_\alpha (t)=  \big( A_\alpha +  2u_{\mathbf{0}} (t) \,Id \big) \, u_\alpha(t)  +  \sum_{\mathbf{0} < \gamma <  \alpha } \, u_\gamma(t) \,\, u_{\alpha-\gamma}(t) + f_\alpha(t), \quad  
 	u_\alpha (0) =  u_\alpha^0 .
 	\end{equation} with $t\in (0,T]$ and $\omega\in\Omega$. 
 	\end{enumerate}
Recall that
\begin{equation*}
\begin{split}
B_{\alpha,2}(t) = A_\alpha +  2u_{\mathbf{0}}(t)\, Id \qquad  \text{and} \qquad
g_{\alpha,2} (t) =  \sum_{\mathbf{0} < \gamma <  \alpha } \, u_\gamma(t) \,\, u_{\alpha-\gamma}(t) + f_\alpha(t), \quad t\in[0,T] 
\end{split}
\end{equation*}
for all $\alpha > \mathbf{0}$, so the system \eqref{sistem 2-2} 
can be written in the form 
\begin{equation}
\label{sistem 2a}
\frac{d}{dt}u_\alpha (t) =  B_{\alpha,2}(t)  \, u_\alpha(t) +  g_{\alpha,2}(t), \quad t\in(0,T];\qquad  
u_\alpha (0) =  u_\alpha^0 .  
\end{equation}

\begin{Theorem}\label{T1} Let the assumptions $(A1)-(A4-2)$ be fulfilled. 
Then there exists a unique almost classical solution $u\in C([0,T],X)\otimes (S)_{-1}$ to \eqref{NLJ-2}. 
\end{Theorem}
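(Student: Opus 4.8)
The plan is to combine the coordinatewise existence already furnished by Lemma \ref{Lema} with one quantitative estimate that assembles the coefficients $u_\alpha$ into a genuine element of $C([0,T],X)\otimes(S)_{-1}$. First I would record that for $n=2$ the $\alpha=\mathbf 0$ equation \eqref{nelinearna det-2} is solvable in $C^1([0,T],X)$ by (A4-2), so that $M_2=\sup_{t\in[0,T]}\|u_{\mathbf 0}(t)\|_X<\infty$ and, for every $\alpha>\mathbf 0$, the perturbed generators $B_{\alpha,2}(t)=A_\alpha+2u_{\mathbf 0}(t)\,Id$ produce an evolution system $S_{\alpha,2}(t,s)$ obeying the uniform bound \eqref{ocena S_alpha} with a single exponent $w_2=w+2M_2 m$ independent of $\alpha$. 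Lemma \ref{Lema} then yields, recursively in $|\alpha|$, unique classical solutions $u_\alpha\in C^1([0,T],X)$ given by \eqref{sol hom}--\eqref{sol nehom}. The only thing left, and the real content of the theorem, is to show that the formal sum \eqref{proces} converges in $X\otimes(S)_{-1,-p}$ for some $p$, uniformly on $[0,T]$.

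Second, I would turn the variation-of-constants formulas into a scalar recursive inequality. Writing $N_\alpha=\sup_{t\in[0,T]}\|u_\alpha(t)\|_X$, using the Banach-algebra estimate $\|u_\gamma u_{\alpha-\gamma}\|_X\le\|u_\gamma\|_X\|u_{\alpha-\gamma}\|_X$ on the convolution term of $g_{\alpha,2}$, and $\|S_{\alpha,2}(t,s)\|\le me^{w_2T}$, I obtain a constant $C>0$ (depending only on $m,w_2,T$) with
\[ N_\alpha\ \le\ C\,b_\alpha\ +\ C\sum_{\mathbf 0<\gamma<\alpha}N_\gamma\,N_{\alpha-\gamma},\qquad b_\alpha:=\|u_\alpha^0\|_X+\sup_{t\in[0,T]}\|f_\alpha(t)\|_X, \]
where, by (A2)--(A3), the data satisfy $\sum_\alpha b_\alpha^2(2\N)^{-p_0\alpha}<\infty$ for some $p_0\ge 0$. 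This is precisely the Wick-square/Catalan recursion structure.

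Third, I would dominate $N_\alpha$ by the majorant $R_\alpha$ given by $R_{\mathbf 0}=0$ and $R_\alpha=Cb_\alpha+C\sum_{\mathbf 0<\gamma<\alpha}R_\gamma R_{\alpha-\gamma}$, so that $N_\alpha\le R_\alpha$ by induction on $|\alpha|$. Passing to formal generating functions $\mathcal R(z)=\sum_\alpha R_\alpha z^\alpha$, $\mathcal B(z)=\sum_{|\alpha|\ge1}b_\alpha z^\alpha$, the recursion becomes $C\mathcal R^2-\mathcal R+C\mathcal B=0$, whose admissible root expands through the Catalan generating function $\frac{1-\sqrt{1-4x}}2=\sum_{n\ge1}\mathbf c_{n-1}x^n$ of Lemma \ref{katalan} as $\mathcal R=\sum_{n\ge1}C^{2n-1}\mathbf c_{n-1}\mathcal B^n$. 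Reading off coefficients and using \eqref{Cat ocena}, $R_\alpha\le C^{-1}\sum_{n\ge1}(4C^2)^n\sum_{\alpha^{(1)}+\dots+\alpha^{(n)}=\alpha}\prod_i b_{\alpha^{(i)}}$ over nonzero $\alpha^{(i)}$. Because the weight factorises, $(2\N)^{-q\alpha}=\prod_i(2\N)^{-q\alpha^{(i)}}$, multiplying by $(2\N)^{-q\alpha}$ and summing gives the geometric majorant
\[ \sum_{\alpha}R_\alpha(2\N)^{-q\alpha}\ \le\ \frac1C\sum_{n\ge1}\big(4C^2\,\mathcal B_q\big)^n,\qquad \mathcal B_q:=\sum_{\beta\neq\mathbf 0}b_\beta(2\N)^{-q\beta}. \]
Since $\mathcal B_q\le(\sum_\beta b_\beta^2(2\N)^{-p_0\beta})^{1/2}(\sum_{\beta\neq\mathbf 0}(2\N)^{(p_0-2q)\beta})^{1/2}\to0$ as $q\to\infty$ (the last factor controlled exactly as in Lemma \ref{lema ocena}), I fix $q$ so large that $4C^2\mathcal B_q<1$, making the right-hand side finite. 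Such an $\ell^1$ bound upgrades to the required $\ell^2$ bound: from $R_\alpha\le(\text{const})(2\N)^{q\alpha}$ one gets $\sum_\alpha R_\alpha^2(2\N)^{-p\alpha}\le(\text{const})\sum_\alpha R_\alpha(2\N)^{-(p-q)\alpha}<\infty$ for $p$ large. (Alternatively, one may bound $R_\alpha$ by the closed form of Lemma \ref{multi katalan} after isolating the length-one data and then invoke Lemmas \ref{ocena faktorijela} and \ref{lema ocena} directly.) This yields $\sup_{t\in[0,T]}\|u(t)\|_{X\otimes(S)_{-1,-p}}<\infty$.

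Finally, I would conclude. Each $u_\alpha\in C^1([0,T],X)\subset C([0,T],X)$, and the uniform-in-$t$ weighted bound just obtained shows the series \eqref{proces} converges in $C([0,T],X)\otimes(S)_{-1,-p}$; hence $u\in C([0,T],X)\otimes(S)_{-1}$ is an almost classical solution. Uniqueness is immediate, since any solution in the form \eqref{proces} must have coefficients solving \eqref{nelinearna det-2} and \eqref{sistem 2a}, and these Cauchy problems have unique classical solutions by (A4-2) and Lemma \ref{Lema}, so the coefficients, and therefore $u$, are uniquely determined. \textbf{The main obstacle} is the third step: taming the quadratic convolution coming from the Wick square through the Catalan combinatorics while simultaneously absorbing the inhomogeneous data $b_\alpha$, which appears at \emph{every} multi-index length rather than only at length one, and then choosing the weight $q$ (equivalently $p$) large enough to force convergence of the resulting Catalan/geometric series.
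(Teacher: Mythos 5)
Your argument is correct, and it reaches the conclusion by a genuinely different route through the central combinatorial step. The paper first reduces the data to the uniform bounds $\|u_\alpha^0\|_X\leq K(2\mathbb{N})^{p\alpha}$, $\sup_t\|f_\alpha(t)\|_X\leq K(2\mathbb{N})^{p\alpha}$ with $K<1$, splits the multi-indices according to whether $L_\alpha\leq\sqrt{K}(2\mathbb{N})^{p\alpha}$ (trivial case) or not, and for the remaining set $\mathcal I_*$ introduces the normalized quantities $\tilde L_\alpha=2m_2e^{w_2T}\bigl(L_\alpha/(\sqrt{K}(2\mathbb{N})^{p\alpha})+1\bigr)$; the whole point of that normalization is to convert the recursion with a source at every level into a subsolution of the \emph{pure} Catalan recursion $R_\alpha=\sum_{\mathbf 0<\gamma<\alpha}R_\gamma R_{\alpha-\gamma}$ seeded only at $|\alpha|=1$, to which the closed form of Lemma \ref{multi katalan}, the factorial estimate of Lemma \ref{ocena faktorijela} and the bound $\mathbf c_n\leq 4^n$ apply, yielding the explicit pointwise estimate $L_\alpha\leq H(4c)^{|\alpha|}(2\mathbb{N})^{(p+2)\alpha}$ before Lemma \ref{lema ocena} closes the sum. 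You instead keep the inhomogeneous data $b_\alpha$ as a source at every multi-index, majorize by the exact solution of $R_\alpha=Cb_\alpha+C\sum_{\mathbf 0<\gamma<\alpha}R_\gamma R_{\alpha-\gamma}$, extract its coefficients from the quadratic generating-function identity (equivalently, the tree expansion over ordered compositions of $\alpha$ into nonzero parts, whose coefficients satisfy the Catalan recursion \eqref{Catalan}), and then exploit the multiplicativity of the weight $(2\mathbb{N})^{-q\alpha}$ over compositions to dominate the weighted $\ell^1$ sum by the geometric series $\sum_n(4C^2\mathcal B_q)^n$, which converges once $q$ is large enough that $\mathcal B_q$ is small; the $\ell^1$-to-$\ell^2$ upgrade at the end is legitimate. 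What your route buys is the elimination of the case distinction on $\mathcal I_*$, of the somewhat ad hoc normalization $\tilde L_\alpha$, and of Lemmas \ref{multi katalan} and \ref{ocena faktorijela}; it also absorbs the inhomogeneity at all lengths in one stroke, which is exactly the difficulty you correctly identify as the main obstacle. What the paper's route buys is an explicit termwise bound on each $L_\alpha$ with a concrete loss of weight ($q>2p+s+5$), and a reduction to a previously established multi-index Catalan identity from \cite{KL} rather than a formal power-series computation. Both arguments are sound and establish the same statement.
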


\begin{proof} According to Lemma \ref{Lema} for every $\alpha>\mathbf{0}$ the  evolution equation \eqref{sistem 2a} has an unique classical solution $u_\alpha\in C^1([0,T],X).$  Thus, the generalized stochastic process $u(t,\omega)=\sum_{\alpha\in\mathcal{I}}u_\alpha(t)H_\alpha(\omega),\;t\in[0,T],\;\omega\in \Omega$ has coefficients that are all classical solutions to the corresponding deterministic equation \eqref{sistem 2a}, hence in order to show that $u$ is an almost classical solution to \eqref{NLJ-2} one has to prove that $u\in C([0,T],X)\otimes (S)_{-1}.$ 

Let $u^0\in X\otimes (S)_{-1}$ be an initial condition satisfying assumption (A2) which states that there exist $\tilde{p}\geq 0$ and $\tilde{K}>0$ such that $\sum_{\alpha\in\mathcal{I}}\|u^0_\alpha\|_X^2(2\mathbb{N})^{-\tilde{p}\alpha}=\tilde{K}.$ Then there also exist $p\geq 0$ and $K\in (0,1)$ such that $\sum_{\alpha\in \mathcal{I}}\|u^0_\alpha\|_X^2(2\mathbb{N})^{-2p\alpha}=K^2$, or equivalently
\begin{equation}\label{ocena p. uslova}
(\exists p\geq 0)\;(\exists K\in (0,1))\; (\forall \alpha\in\mathcal{I})\quad \|u_\alpha^0\|_X\leq K(2\mathbb{N})^{p\alpha}.
\end{equation}
The inhomogeneous part $f\in C^1([0,T],X)\otimes (S)_{-1}$ satisfies assumption (A3) which states that there exists $\tilde{p}\geq 0$ such that $\sum_{\alpha\in\mathcal{I}}\sup_{t\in[0,T]}\|f_\alpha(t)\|_X^2(2\mathbb{N})^{-\tilde{p}\alpha}<\infty.$ Then there exist $p\geq 0$ and $K\in (0,1)$ such that
\begin{equation}\label{ocena ih. uslova}
\sup_{t\in[0,T]}\|f_\alpha(t)\|_X\leq K(2\mathbb{N})^{p\alpha},\quad \alpha\in \mathcal{I}.
\end{equation}
The coefficients $u_\alpha,\;\alpha\in\mathcal{I},\;\alpha>\mathbf{0}$ of the solution $u$ are given by \eqref{sol hom} and \eqref{sol nehom} for $n=2.$ Denote by
\begin{align*}
L_\alpha:=\sup_{t\in[0,T]}\|u_\alpha(t)\|_X,\quad \alpha\in\mathcal{I}.
\end{align*}

First, for $\alpha=\mathbf{0}$ using \eqref{ocena u^0} one obtain 
\begin{equation}\label{ocena L0-2}
L_\mathbf{0}=\sup_{t\in[0,T]}\|u_\mathbf{0}(t)\|_X=M_2,
\end{equation}
since the solution to \eqref{nelinearna det-2} satisfies assumption (A4-2).
Let $|\alpha|=1.$ Then $\alpha=\varepsilon_k,\;k\in \mathbb{N}$ and using \eqref{sol hom} we have that
$$\|u_{\varepsilon_k} (t)\|_X\leq \|S_{\varepsilon_k,2} (t,0)\|\| u_{\varepsilon_k}^0\|_X+\int_0^t\|S_{\varepsilon_k,2} (t,s)\|\|f_{\varepsilon_k}(s)\|_X ds, \quad t\in [0,T].$$
From \eqref{ocena S_alpha} we obtain that
\begin{align}\label{ocen int}
\int_0^t\|S_{\alpha,2}(t,s)\|ds\leq\int_0^t me^{w_2(t-s)}ds =m\frac{e^{w_2 t}-1}{w_2}\leq \frac{m}{w_2}e^{w_2 T},\quad t\in[0,T],\quad\alpha>\mathbf{0}
\end{align} and now \eqref{ocena S_alpha}, \eqref{ocena p. uslova} and \eqref{ocena ih. uslova} imply that
\begin{align}\label{ocena L_epsilon}
L_{\varepsilon_k}&=\sup_{t\in[0,T]}\|u_{\varepsilon_k} (t)\|_X\leq\sup_{t\in[0,T]}\Big\{\|S_{\varepsilon_k,2} (t,0)\|\| u_{\varepsilon_k}^0\|_X+\sup_{s\in[0,t]}\|f_{\varepsilon_k}(s)\|_X\int_0^t\|S_{\alpha,2}(t,s)\|ds\Big\}\\\nonumber
&\leq me^{w_2T}K(2\mathbb{N})^{p\varepsilon_k} +\frac{m}{w_2}e^{w_2 T}K(2\mathbb{N})^{p\varepsilon_k}=m_2e^{w_2T}K(2\mathbb{N})^{p\varepsilon_k},\quad t\in[0,T],\quad k\in\mathbb{N},
\end{align}
where $m_2=m+\frac{m}{w_2}.$

For $|\alpha|>1$ we consider two possibilities for $L_\alpha.$ First, if $L_\alpha\leq \sqrt{K}(2\mathbb{N})^{p\alpha}$ for all $|\alpha|>1$ then the statement of the theorem follows directly since for $q>2p+1$ and, having in mind \eqref{ocena L0-2} and \eqref{ocena L_epsilon}, we obtain
\begin{align*}
\sum_{\alpha\in\mathcal{I}}\sup_{t\in[0,T]}\|u_\alpha(t)\|^2_X(2\mathbb{N}&)^{-q\alpha}=\sum_{\alpha\in\mathcal{I}} L_\alpha^2(2\mathbb{N})^{-q\alpha}=L_\mathbf{0}^2+\sum_{k\in \mathbb{N}}L_{\varepsilon_k}^2(2\mathbb{N})^{-q\varepsilon_k} + \sum_{|\alpha|>1}L_\alpha^2(2\mathbb{N})^{-q\alpha}\\
&\leq M_2^2+(m_2e^{w_2 T}K)^2\sum_{k\in\mathbb{N}}(2\mathbb{N})^{(2p-q)\varepsilon_k} + K\sum_{|\alpha|>1}(2\mathbb{N})^{(2p-q)\alpha}<\infty,
\end{align*}
i.e. $u\in  C([0,T],X)\otimes (S)_{-1,-q}.$

In what follows, we will assume that $L_\alpha> \sqrt{K}(2\mathbb{N})^{p\alpha}$ for some $\alpha\in \mathcal{I},\;|\alpha|>1.$ Denote by $\mathcal I_*$ the set of all multi-indices  $\alpha\in \mathcal{I},\;|\alpha|>1$, for which $L_\alpha> \sqrt{K}(2\mathbb{N})^{p\alpha}$.
Then from \eqref{sol nehom} we obtain

$$u_\alpha(t)=S_{\alpha,2}(t,0)u^0_\alpha+\int_0^t S_{\alpha,2}(t,s)\Big[\sum_{0<\gamma<\alpha}u_{\alpha-\gamma}(s)u_\gamma(s)+f_\alpha(s)\Big]ds,\quad t\in [0,T].$$
From this we have
\begin{align*}
L_\alpha &=\sup_{t\in[0,T]}\|u_\alpha(t)\|_X \\
 & \leq \sup_{t\in[0,T]}\Bigg\{\|S_{\alpha,2}(t,0)\|\|u^0_\alpha\|_X+\int_0^t \|S_{\alpha,2}(t,s)\|\Big\|\sum_{0<\gamma<\alpha}u_{\alpha-\gamma}(s)u_\gamma(s)\Big\|ds\\
 &+\int_0^t\|S_{\alpha,2}(t,s)\|\|f_\alpha(s)\|_Xds\Bigg\}\\
&\leq \sup_{t\in[0,T]}\Bigg\{me^{w_2t}\|u^0_\alpha\|_X+\sup_{s\in[0,t]}\sum_{0<\gamma<\alpha}\|u_{\alpha-\gamma}(s)\|_X\|u_\gamma(s)\|_X\cdot\int_0^t \|S_{\alpha,2}(t,s)\|ds\\
&+\sup_{s\in[0,t]}\|f(s)\|_X\int_0^t\|S_{\alpha,2}(t,s)\|ds\Bigg\}.
\end{align*}
Using \eqref{ocen int} we obtain
\begin{align*}
L_\alpha &=\sup_{t\in[0,T]}\|u_\alpha(t)\|_X  \\
& \leq me^{w_2T}\|u^0_\alpha\|_X+\frac{m}{w_2}e^{w_2T}\sum_{0<\gamma<\alpha}\sup_{t\in[0,T]}\|u_{\alpha-\gamma}(t)\|_X\sup_{t\in[0,T]}\|u_\gamma(t)\|_X \\
&+\frac{m}{w_2}e^{w_2T}\sup_{s\in[0,t]}\|f(s)\|_X\\
& \leq m_2e^{w_2T}K(2\mathbb{N})^{p\alpha}+\frac{m}{w_2}e^{w_2T}\sum_{\mathbf{0}<\gamma<\alpha}L_{\alpha-\gamma}L_\gamma,
\end{align*} where again $m_2=m+\frac{m}{w_2}.$
Since $m_2\geq\frac{m}{w_2},$  one easily obtains
\begin{align}\label{ocena L_alpha}
L_\alpha\leq m_2e^{w_2T}\Big(K(2\mathbb{N})^{p\alpha}+\sum_{\mathbf{0}<\gamma<\alpha}L_{\alpha-\gamma}L_\gamma\Big).
\end{align}
Let $\tilde{L}_\alpha,\;\alpha>\mathbf{0},\;\alpha\in\mathcal I_*,$ be given by
\begin{align*}
\tilde{L}_{\alpha}:=2m_2e^{w_2 T}\Big(\frac{L_\alpha}{\sqrt{K}(2\mathbb{N})^{p\alpha}}+1\Big),\quad \alpha>\mathbf{0},\;\alpha\in\mathcal I_*.
\end{align*}
Thus, from \eqref{ocena L_epsilon} we have that for all $k\in \mathbb{N}$
\begin{align}\label{ocena tilda L_alpha}
\tilde{L}_{\varepsilon_k}=2m_2e^{w_2 T}\Big(\frac{L_{\varepsilon_k}}{\sqrt{K}(2\mathbb{N})^{p\varepsilon_k}}+1\Big)&\leq 2m_2e^{w_2 T}\Big(\frac{m_2e^{w_2 T}K(2\mathbb{N})^{p\varepsilon_k}}{\sqrt{K}(2\mathbb{N})^{p\varepsilon_k}}+1\Big)\\\nonumber
&=2m_2e^{w_2 T}(m_2e^{w_2 T}\sqrt{K}+1).
\end{align}
We proceed with the estimation of the term $\sum_{\mathbf{0}<\gamma<\alpha}\tilde L_\gamma\tilde L_{\alpha-\gamma}$ for given $|\alpha|>1,\;\alpha\in\mathcal I_*.$
\begin{align*}
\sum_{\mathbf{0}<\gamma<\alpha}\tilde L_\gamma\tilde L_{\alpha-\gamma}&=\sum_{\mathbf{0}<\gamma<\alpha} (2m_2e^{w_2 T})^2\Big(\frac{L_{\gamma}}{\sqrt{K}(2\mathbb{N})^{p\gamma}}+1\Big)\Big(\frac{L_{\alpha-\gamma}}{\sqrt{K}(2\mathbb{N})^{p(\alpha-\gamma)}}+1\Big)\\
&\geq (2m_2e^{w_2 T})^2 \Big(\sum_{\mathbf{0}<\gamma<\alpha}\frac{L_\gamma L_{\alpha-\gamma}}{K(2\mathbb{N})^{p\alpha}}+1\Big)\\
&=\frac{(2m_2e^{w_2 T})^2}{K(2\mathbb{N})^{p\alpha}}\sum_{\mathbf{0}<\gamma<\alpha}L_\gamma L_{\alpha-\gamma}+(2m_2e^{w_2 T})^2.
\end{align*}
Using inequality \eqref{ocena L_alpha} we obtain
\begin{align*}
\sum_{\mathbf{0}<\gamma<\alpha}\tilde L_\gamma\tilde L_{\alpha-\gamma}&\geq \frac{(2m_2e^{w_2 T})^2}{K(2\mathbb{N})^{p\alpha}}\Big(\frac{L_\alpha}{m_2e^{w_2 T}}-K(2\mathbb{N})^{p\alpha}\Big)+(2m_2e^{w_2 T})^2=\frac{4m_2e^{w_2 T}}{K(2\mathbb{N})^{p\alpha}}L_\alpha.
\end{align*}
Now since $L_\alpha>\sqrt{K}(2\mathbb{N})^{p\alpha}$ for $\alpha\in\mathcal I_*$ and since  $K<1$ we obtain
\begin{align*}
\sum_{\mathbf{0}<\gamma<\alpha}\tilde L_\gamma\tilde L_{\alpha-\gamma}&\geq \frac{4m_2e^{w_2 T}}{\sqrt{K}(2\mathbb{N})^{p\alpha}}L_\alpha=\frac{2m_2e^{w_2 T}}{\sqrt{K}(2\mathbb{N})^{p\alpha}}L_\alpha+\frac{2m_2e^{w_2 T}}{\sqrt{K}(2\mathbb{N})^{p\alpha}}L_\alpha\\
&\geq 2m_2e^{w_2 T}\Big(\frac{L_\alpha}{\sqrt{K}(2\mathbb{N})^{p\alpha}}+1\Big)=\tilde{L}_\alpha.
\end{align*}
Hence, for all $\alpha\in\mathcal I_*$, $|\alpha|>1$, we have obtained
\begin{align*}
\sum_{\mathbf{0}<\gamma<\alpha}\tilde L_\gamma\tilde L_{\alpha-\gamma}\geq \tilde{L}_\alpha. 
\end{align*}
Let $R_\alpha,\alpha>\mathbf{0},$ be defined as follows:
\begin{align*}
R_{\varepsilon_k}&=\tilde{L}_{\varepsilon_k},\quad k\in\mathbb{N},\\
R_\alpha&=\sum_{\mathbf{0}<\gamma<\alpha}R_\gamma R_{\alpha-\gamma},\quad |\alpha|>1.
\end{align*}
It is a direct consequence of the definition of the numbers $R_\alpha,\alpha>\mathbf{0},$ and it can be shown by induction with respect to the length of the multi-index $\alpha>\mathbf{0}$ that (see \cite[Section 5]{KL})
\begin{align}\label{L manje od R}
\tilde{L}_\alpha\leq R_{\alpha},\quad \alpha>\mathbf{0}.
\end{align}
Lemma \ref{multi katalan} shows that the numbers $R_\alpha,\;\alpha>\mathbf{0}$ satisfy
\begin{align*}
R_\alpha=\frac{1}{|\alpha|}\binom{2|\alpha|-2}{|\alpha|-1}\frac{|\alpha|!}{\alpha!}\prod_{i=1}^\infty R_{\varepsilon_i}^{\alpha_i},\quad \alpha>\mathbf{0}.
\end{align*}
Further on, by \eqref{ocena tilda L_alpha},
$$\prod_{i=1}^\infty R_{\varepsilon_i}^{\alpha_i}=\prod_{i=1}^\infty \tilde{L}_{\varepsilon_i}^{\alpha_i}\leq \prod_{i=1}^\infty (2m_2e^{w_2 T}(m_2e^{w_2 T}\sqrt{K}+1))^{\alpha_i}.$$
Let $c=2m_2e^{w_2 T}(m_2e^{w_2 T}\sqrt{K}+1).$ Then
\begin{align}\label{ocena R}
R_\alpha\leq \mathbf{c}_{|\alpha|-1}\frac{|\alpha|!}{\alpha!}c^{|\alpha|},\quad \alpha >\mathbf{0},
\end{align}
where $\mathbf{c}_n=\frac{1}{n+1}\binom{2n}{n},\;n\geq 0$ denotes the $n$th Catalan number (more information on Catalan numbers is provided in Lemma \ref{katalan}). Using Lemma \ref{ocena faktorijela}, \eqref{L manje od R}, \eqref{ocena R} and \eqref{Cat ocena} we obtain that for $\alpha\in\mathcal I_*$, $|\alpha|>1$ the estimation
\begin{align*}
\tilde{L}_\alpha\leq R_\alpha \leq 4^{|\alpha|-1}(2\mathbb{N})^{2\alpha}c^{|\alpha|}
\end{align*} holds.
Finally, from the definition of $\tilde L_\alpha,\;\alpha>\mathbf{0}$ we obtain
\begin{align*}
 L_\alpha\leq \Big(\frac{4^{|\alpha|-1}(2\mathbb{N})^{2\alpha}c^{|\alpha|}}{2m_2e^{w_2 T}}-1\Big)\sqrt{K}(2\mathbb{N})^{p\alpha}\leq \frac{\sqrt{K}}{8m_2e^{w_2 T}}(4c)^{|\alpha|}(2\mathbb{N})^{(p+2)\alpha}.
\end{align*}
Notice that the upper estimate also holds for $|\alpha|>1$, $\alpha\in\mathcal I\setminus\mathcal I_*$. Indeed, if $L_\alpha<\sqrt{K}(2\mathbb{N})^{p\alpha}$ then also $L_\alpha<\frac{\sqrt{K}}{8m_2e^{w_2 T}}(4c)^{|\alpha|}(2\mathbb{N})^{(p+2)\alpha}$, so we obtain
\begin{align*}
L_\alpha\leq \frac{\sqrt{K}}{8m_2e^{w_2 T}}(4c)^{|\alpha|}(2\mathbb{N})^{(p+2)\alpha},\quad \mbox{ for all } \alpha\in\mathcal I,\;\;|\alpha|>1 .
\end{align*}
Now we can prove that $u(t,\omega)=\sum_{\alpha\in \mathcal{I}}u_\alpha(t)H_\alpha(\omega)\in C([0,T],X)\otimes (S)_{-1}.$ Denote by $H=\frac{\sqrt{K}}{8m_2e^{w_2 T}}.$ Then
\begin{align*}
\sum_{\alpha\in\mathcal{I}}\sup_{t\in[0,T]}&\|u_\alpha(t)\|^2_X(2\mathbb{N})^{-q\alpha}=\sup_{t\in[0,T]}\|u_{\mathbf{0}}(t)\|^2_X+\sum_{\alpha>\mathbf{0}}\sup_{t\in[0,T]}\|u_\alpha(t)\|^2_X(2\mathbb{N})^{-q\alpha}\\
&=M_2^2+\sum_{k\in\mathbb{N}}L_{\varepsilon_k}^2(2\mathbb{N})^{-q\varepsilon_k}+\sum_{|\alpha|>1}L_\alpha^2(2\mathbb{N})^{-q\alpha}\\
&\leq M_2^2+(m_2e^{w_2 T}K)^2\sum_{k\in\mathbb{N}}(2\mathbb{N})^{(2p-q)\varepsilon_k}+H^2\sum_{|\alpha|>1}\Big((4c)^{|\alpha|}(2\mathbb{N})^{(p+2)\alpha}\Big)^2(2\mathbb{N})^{-q\alpha}\\
&=M_2^2+(m_2e^{w_2 T}K)^2\sum_{k\in\mathbb{N}}(2\mathbb{N})^{(2p-q)\varepsilon_k}+H^2\sum_{|\alpha|>1}(16c^2)^{|\alpha|}(2\mathbb{N})^{(2p+4-q)\alpha}.
\end{align*}
Taking that $s>0$ is such that $2^s\geq 16c^2,$ according to Lemma \ref{lema ocena}, we obtain
 \begin{align*}
\sum_{\alpha\in\mathcal{I}}\sup_{t\in[0,T]}\|u_\alpha(t)\|^2_X(2\mathbb{N})^{-q\alpha}&\leq M_2^2+(m_2e^{w_2 T}K)^2\sum_{k\in\mathbb{N}}(2\mathbb{N})^{(2p-q)\varepsilon_k}\\
&+H^2\sum_{|\alpha|>1}(2\mathbb{N})^{(2p+4+s-q)\alpha}<\infty
\end{align*}
for $q>2p+s+5.$
\end{proof}

In the sequel we prove the existence of the almost classical solution of the Cauchy problem 
\begin{align}\label{NLJ-3}
u_t (t, \omega)&= \mathbf A \, u(t, \omega) + u^{\lozenge 3}(t, \omega)+f(t,\omega), \quad t\in[0,T]\\\nonumber
u(0,\omega) &= u^0(\omega), 
\end{align} 

Note that
 \begin{align}
 &u^{\lozenge 3}(t,\omega)= u^{\lozenge 2}(t,\omega) \, \lozenge \, u(t,\omega) 
 =\sum_{\alpha \in \mathcal I}  \, \sum_{\beta\leq \alpha }  \sum_{\gamma\leq \beta } \,  u_{\alpha-\beta} (t) \,  u_{\beta-\gamma} (t) \, u_\gamma(t) \,  H_\alpha(\omega) \,\nonumber \\
 &=u^3_{\mathbf{0}}(t)\,H_{\mathbf{0}}(\omega) \,\nonumber\\
 &+  \sum_{|\alpha|>0} \Big(3 u_{\mathbf{0}}^2 u_\alpha(t)+ 3 u_{\mathbf{0}} \sum_{0<\beta< \alpha }  u_{\alpha-\beta} (t)  u_\beta(t) + 
\sum_{0<\beta < \alpha } \sum_{0<\gamma< \beta }    u_{\alpha-\beta} (t) u_{\beta-\gamma} (t)   u_\gamma(t)\Big)  H_\alpha(\omega), 
\label{WP3}
 \end{align} for $ t\in[0,T]$, $\omega\in\Omega$.  
Applying the Wiener-It\^o chaos expansion method to the nonlinear stochastic equation \eqref{NLJ-3} reduces to the system of infinitely many deterministic Cauchy problems: 
\begin{enumerate}
	\item[$1^\circ$] for $\alpha =\mathbf{0}$ 
	\begin{equation*}
	\frac{d}{dt} u_{\mathbf{0}} (t) =  A_{\mathbf{0}} u_{\mathbf{0}}  (t) +   u^3_{\mathbf{0}} (t) +f_\mathbf{0}(t), \quad u_{\mathbf{0}}(0) = u_{\mathbf{0}}^0,  \qquad \text{and}
	\end{equation*}
	\item[$2^\circ$] for  $\alpha >\mathbf{0}$
	\begin{equation}
	\label{sistem alfa -jed treceg reda}
	\begin{split}
	\frac{d}{dt}u_\alpha (t) &=  \big( A_\alpha +  3u_{\mathbf{0}}^2 (t) Id \big) u_\alpha(t)  +  3 u_{\mathbf{0}} \sum_{0<\beta< \alpha }  u_{\alpha-\beta} (t)  u_\beta(t) + \\	
	  &+\sum_{0<\beta < \alpha } \sum_{0<\gamma< \beta }    u_{\alpha-\beta} (t) u_{\beta-\gamma} (t)    u_\gamma(t) + f_\alpha(t), \\
	u_\alpha (0) &=  u_\alpha^0 .
\end{split}
	\end{equation} with $t\in (0,T]$ and $\omega\in\Omega$. 
\end{enumerate}
Let
\begin{equation}
\begin{split}
B_{\alpha,3}(t) &= A_\alpha +  3u_{\mathbf{0}}^2(t)\, Id \qquad  \text{and}  \\
g_{\alpha,3} (t) &= 3 u_{\mathbf{0}} \sum_{0<\beta< \alpha }  u_{\alpha-\beta} (t)  u_\beta(t) + 
\sum_{0<\beta < \alpha } \sum_{0<\gamma< \beta }    u_{\alpha-\beta} (t) u_{\beta-\gamma} (t) u_\gamma(t) + f_\alpha(t), \quad t\in[0,T] \label{h alfa}
\end{split}
\end{equation}
for all $\alpha > \mathbf{0}$, then, the system \eqref{sistem alfa -jed treceg reda} 
can be written in the form 
\begin{equation}
\label{sistem 3a}
\frac{d}{dt}u_\alpha (t) =  B_{\alpha,3}(t)  \, u_\alpha(t) +  g_{\alpha,3}(t), \quad t\in(0,T];\qquad  
u_\alpha (0) =  u_\alpha^0 .  
\end{equation}

\begin{Theorem}\label{T3} Let the assumptions $(A1)-(A4-3)$ be fulfilled. 
	Then, there exists a unique almost classical solution $u\in C([0,T],X)\otimes (S)_{-1}$ to \eqref{NLJ-3}.
\end{Theorem}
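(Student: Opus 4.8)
The plan is to repeat the scheme of Theorem \ref{T1} and to reduce the cubic convolution appearing in \eqref{h alfa} to the quadratic one already controlled there. By Lemma \ref{Lema} applied with $n=3$, for every $\alpha>\mathbf{0}$ the linear problem \eqref{sistem 3a} has a unique classical solution $u_\alpha\in C^1([0,T],X)$ given by the Duhamel formula \eqref{sol nehom}, where the associated evolution system satisfies $\|S_{\alpha,3}(t,s)\|\leq me^{w_3(t-s)}$ with $w_3=w+3M_3^2m$ and $M_3=\sup_{t\in[0,T]}\|u_{\mathbf 0}(t)\|_X$, the latter finite by (A4-3). Uniqueness of the coordinates gives uniqueness of $u$, so it remains to prove $u=\sum_{\alpha}u_\alpha H_\alpha\in C([0,T],X)\otimes(S)_{-1}$, i.e. that $\sum_{\alpha}L_\alpha^2(2\mathbb N)^{-q\alpha}<\infty$ for some $q$, where $L_\alpha:=\sup_{t\in[0,T]}\|u_\alpha(t)\|_X$.

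First I would fix, exactly as in Theorem \ref{T1}, constants $p\geq0$ and $K\in(0,1)$ with $\|u_\alpha^0\|_X\leq K(2\mathbb N)^{p\alpha}$ and $\sup_t\|f_\alpha(t)\|_X\leq K(2\mathbb N)^{p\alpha}$ (cf.\ \eqref{ocena p. uslova}, \eqref{ocena ih. uslova}), which yield $L_{\mathbf 0}=M_3$ and $L_{\varepsilon_k}\leq m_3e^{w_3T}K(2\mathbb N)^{p\varepsilon_k}$ with $m_3=m+\tfrac{m}{w_3}$. Inserting \eqref{h alfa} into the Duhamel formula and using \eqref{ocena S_alpha}, \eqref{ocen int}, for $|\alpha|>1$ I obtain a recursive estimate of the form
\begin{align*}
L_\alpha\leq m_3e^{w_3T}\Big(K(2\mathbb N)^{p\alpha}+3M_3\sum_{\mathbf 0<\beta<\alpha}L_{\alpha-\beta}L_\beta+\sum_{\mathbf 0<\beta<\alpha}\sum_{\mathbf 0<\gamma<\beta}L_{\alpha-\beta}L_{\beta-\gamma}L_\gamma\Big).
\end{align*}
Compared with \eqref{ocena L_alpha}, the only new feature is the triple convolution, which is exactly the term that must be tamed.

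Next, following Theorem \ref{T1}, I split $\mathcal I$ according to whether $L_\alpha\leq\sqrt K(2\mathbb N)^{p\alpha}$, introduce on the exceptional set the normalization $\tilde L_\alpha=2m_3e^{w_3T}\big(L_\alpha/(\sqrt K(2\mathbb N)^{p\alpha})+1\big)$, and derive (absorbing the constant $3M_3$ and the prefactors into a single $c$) the super-multiplicative inequality
\begin{align*}
\tilde L_\alpha\leq\sum_{\mathbf 0<\beta<\alpha}\tilde L_{\alpha-\beta}\tilde L_\beta+\sum_{\mathbf 0<\beta<\alpha}\sum_{\mathbf 0<\gamma<\beta}\tilde L_{\alpha-\beta}\tilde L_{\beta-\gamma}\tilde L_\gamma.
\end{align*}
I then dominate $\tilde L_\alpha\leq R_\alpha$ by the sequence $R$ defined through $R_{\varepsilon_k}=\tilde L_{\varepsilon_k}$ and, for $|\alpha|>1$, by the corresponding equality. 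The reduction to the case $n=2$ is now the observation that, since all $R_\alpha\geq0$, every inner quadratic block is controlled by a single coefficient, namely $\sum_{\mathbf 0<\gamma<\beta}R_{\beta-\gamma}R_\gamma\leq R_\beta$ for all $\beta>\mathbf 0$; substituting this into the triple sum gives
\begin{align*}
\sum_{\mathbf 0<\beta<\alpha}\sum_{\mathbf 0<\gamma<\beta}R_{\alpha-\beta}R_{\beta-\gamma}R_\gamma=\sum_{\mathbf 0<\beta<\alpha}R_{\alpha-\beta}\sum_{\mathbf 0<\gamma<\beta}R_{\beta-\gamma}R_\gamma\leq\sum_{\mathbf 0<\beta<\alpha}R_{\alpha-\beta}R_\beta,
\end{align*}
whence $R_\alpha\leq 2\sum_{\mathbf 0<\beta<\alpha}R_{\alpha-\beta}R_\beta$. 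Setting $S_{\varepsilon_k}=2R_{\varepsilon_k}$ and $S_\alpha=\sum_{\mathbf 0<\beta<\alpha}S_{\alpha-\beta}S_\beta$ for $|\alpha|>1$, an induction on $|\alpha|$ gives $2R_\alpha\leq S_\alpha$, so the cubic recursion is majorized by a pure quadratic (Catalan) recursion identical in structure to the one solved in Theorem \ref{T1}.

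From here the argument closes exactly as for $n=2$: Lemma \ref{multi katalan} gives $S_\alpha=\mathbf c_{|\alpha|-1}\tfrac{|\alpha|!}{\alpha!}\prod_i S_{\varepsilon_i}^{\alpha_i}$, and bounding $\prod_i S_{\varepsilon_i}^{\alpha_i}\leq c^{|\alpha|}$ via the $\varepsilon_k$-estimate, $\mathbf c_{|\alpha|-1}\leq4^{|\alpha|-1}$ by \eqref{Cat ocena}, and $\tfrac{|\alpha|!}{\alpha!}\leq(2\mathbb N)^{2\alpha}$ by Lemma \ref{ocena faktorijela}, one recovers a bound $L_\alpha\leq H(4c)^{|\alpha|}(2\mathbb N)^{(p+2)\alpha}$ valid for all $|\alpha|>1$ (the estimate being trivial off the exceptional set). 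Squaring, multiplying by $(2\mathbb N)^{-q\alpha}$ and invoking Lemma \ref{lema ocena} then yields $\sum_\alpha L_\alpha^2(2\mathbb N)^{-q\alpha}<\infty$ for all sufficiently large $q$, i.e.\ $u\in C([0,T],X)\otimes(S)_{-1}$. I expect the main obstacle to be precisely the triple convolution: a careless bound on it would spoil the factorial cancellation $\tfrac{|\alpha|!}{\alpha!}$ needed for convergence, and the whole proof hinges on the elementary but decisive estimate $\sum_{\mathbf 0<\gamma<\beta}R_{\beta-\gamma}R_\gamma\leq R_\beta$ that collapses the degree-three recursion to the degree-two one.
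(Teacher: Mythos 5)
Your proposal is correct and reaches the same conclusion, but it resolves the crucial difficulty --- the triple convolution $\sum_{\mathbf 0<\beta<\alpha}\sum_{\mathbf 0<\gamma<\beta}L_{\alpha-\beta}L_{\beta-\gamma}L_\gamma$ --- by a genuinely different device. The paper splits into two cases on the \emph{unnormalized} quantities: either $L_\alpha\leq\sum_{\mathbf 0<\beta<\alpha}L_{\alpha-\beta}L_\beta$ for all $|\alpha|>2$, in which case a pure quadratic Catalan majorant already works, or the reverse inequality holds, in which case $L_\beta>\sum_{\mathbf 0<\gamma<\beta}L_{\beta-\gamma}L_\gamma$ is fed back into the triple sum to collapse it to a double sum, and the argument of Theorem \ref{T1} is rerun with the modified constants $m_3'=(3M_3+1)m_3$ and $K'=K/(3M_3+1)$. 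You instead keep the full quadratic-plus-cubic recursion, pass to the normalized majorant $R_\alpha$ defined by the corresponding \emph{equality}, and observe that for this majorant the collapse $\sum_{\mathbf 0<\gamma<\beta}R_{\beta-\gamma}R_\gamma\leq R_\beta$ is automatic from nonnegativity, giving $R_\alpha\leq 2\sum_{\mathbf 0<\beta<\alpha}R_{\alpha-\beta}R_\beta$ and hence domination by a Catalan recursion with doubled initial data. This is arguably cleaner: the paper's dichotomy is only argued in its "most complicated case" (the inequality holding for \emph{all} $\alpha$), whereas your majorant sidesteps the question of multi-indices satisfying one inequality or the other entirely. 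The one step you assert without computation is the combined super-multiplicative inequality $\tilde L_\alpha\leq\sum\tilde L_{\alpha-\beta}\tilde L_\beta+\sum\sum\tilde L_{\alpha-\beta}\tilde L_{\beta-\gamma}\tilde L_\gamma$ on the exceptional set; this does go through by the same $(x+1)(y+1)\geq xy+1$ manipulation as in Theorem \ref{T1}, but only after enlarging the normalizing constant $c$ so that $c\geq 3M_3 m_3e^{w_3T}\sqrt K$, $c^2\geq m_3e^{w_3T}K$ and $c\geq 1+m_3e^{w_3T}\sqrt K$ (and $c\geq 2$ so that the inequality is trivial off the exceptional set, which is needed for the induction $\tilde L_\alpha\leq R_\alpha$). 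You should carry out that verification explicitly, since the choice of $c$ is where the coefficient $3M_3$ gets absorbed; once that is done the remainder (Lemma \ref{multi katalan}, the bounds \eqref{Cat ocena} and Lemma \ref{ocena faktorijela}, and Lemma \ref{lema ocena}) closes exactly as you describe.
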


\begin{proof} 
	According to Lemma \ref{Lema} for every $\alpha>\mathbf{0}$ the  evolution equation \eqref{sistem 3a} has an unique classical solution $u_\alpha\in C^1([0,T],X)$ given in the form \eqref{sol nehom}.   Thus, the generalized stochastic process $u(t, \omega)$, represented in the chaos expansion form \eqref{proces},  
	has coefficients that are all classical solutions to the corresponding deterministic equation \eqref{sistem 3a}. Hence,  in order to show that $u$ is an almost classical solution to \eqref{NLJ-3}, one has to prove that $u\in C([0,T],X)\otimes (S)_{-1}.$ 
	
	We assume that the initial condition $u^0\in X\otimes (S)_{-1}$ satisfies assumption $(A2)$, i.e. the estimate  \eqref{ocena p. uslova} holds true. 
	The inhomogeneous part $f\in C^1([0,T],X)\otimes (S)_{-1}$ satisfies assumption $(A3)$, i.e. 
	the estimate \eqref{ocena ih. uslova} is true for some $p\geq 0$. 
	Moreover, the coefficients $u_\alpha,\;\alpha\in\mathcal{I},\;\alpha>\mathbf{0}$ of the solution $u$ are given by \eqref{sol hom} and \eqref{sol nehom} for $n=3.$ Now, for all $\alpha\in\mathcal{I}$ we are going to  estimate
	\begin{align*}
	L_\alpha =\sup_{t\in[0,T]}\|u_\alpha(t)\|_X .
	\end{align*}
	
	It is clear that for $\alpha=\mathbf{0}$ , by $(A4-3)$   we have
	$L_{\mathbf{0}} =  \sup_{t\in[0,T]} \|u_{\mathbf{0}}(t)\| = M_3$. 
	
	For, $|\alpha|=1$, i.e. for $\alpha=\varepsilon_k$, $k\in \mathbb{N}$  by \eqref{sol hom} we have that
	$$\|u_{\varepsilon_k} (t)\|_X\leq \|S_{\varepsilon_k,3} (t,0)\|\| u_{\varepsilon_k}^0\|_X+\int_0^t\|S_{\varepsilon_k,3} (t,s)\|\|f_{\varepsilon_k}(s)\|_X ds, \quad t\in [0,T].$$
	From \eqref{ocena S_alpha} we obtain that
	\begin{align}\label{ocen int1}
	\int_0^t\|S_{\alpha,3}(t,s)\|ds\leq\int_0^t me^{w_3(t-s)}ds 
	\leq \frac{m}{w_3}e^{w_3 T},\quad t\in[0,T],\quad\alpha>\mathbf{0}.
	\end{align} By \eqref{ocena p. uslova},  \eqref{ocena ih. uslova}, \eqref{ocena S_alpha} and \eqref{ocen int1} we obtain 
	\begin{align*}
	L_{\varepsilon_k}&=\sup_{t\in[0,T]}\|u_{\varepsilon_k} (t)\|_X\leq\sup_{t\in[0,T]}\Big\{\|S_{\varepsilon_k,3} (t,0)\|\| u_{\varepsilon_k}^0\|_X+\sup_{s\in[0,t]}\|f_{\varepsilon_k}(s)\|_X\int_0^t\|S_{\alpha,3}(t,s)\|ds\Big\}\\\nonumber
	&\leq me^{w_3T}K(2\mathbb{N})^{p\varepsilon_k} +\frac{m}{w_3}e^{w_3 T}K(2\mathbb{N})^{p\varepsilon_k}, 
	\end{align*}which leads to the estimate 
	\begin{equation}
	\label{ocena L_epsilon kub}
L_{\varepsilon_k} \leq 	m_3e^{w_3T}K(2\mathbb{N})^{p\varepsilon_k},\quad  k\in\mathbb{N},
	\end{equation}
	where $m_3=m+\frac{m}{w_3}.$
	
	For $|\alpha|=2$ we have two different forms of the multiindex. First, for $\alpha=2\varepsilon_k$, $k\in \mathbb N$ from \eqref{h alfa} we obtain the form of the inhomogeneous part  $g_{2\varepsilon_k,3}(t)= 3 u_{\mathbf{0}}(t) \, u_{\varepsilon_k}^2(t)+f_{2\varepsilon_k}(t)$, where 
	\[\begin{split}
	\sup_{s\in[0,t]}\|g_{2\varepsilon_k,3}(s)\|_X &\leq 3M_3 L_{\varepsilon_k}^2 + \sup_{s\in[0,t]}\|f_{2\varepsilon_k}(s)\|_X\\
	& \leq  3M_3 m_3^2e^{2w_3T}K^2 (2\mathbb{N})^{2p\varepsilon_k} + K (2\mathbb{N})^{2p\varepsilon_k} \\
	&\leq  (3M_3 m_3^2e^{2w_3T}K^2 + K) \, (2\mathbb{N})^{2p\varepsilon_k}. 
	\end{split}\]
	
	 Then, together with  \eqref{sol nehom} we obtain 
		\begin{align*}
		L_{2\varepsilon_k}&=\sup_{t\in[0,T]}\|u_{2\varepsilon_k} (t)\|_X \\
		&\leq\sup_{t\in[0,T]}\Big\{\|S_{2\varepsilon_k,3} (t,0)\|\| u_{2\varepsilon_k}^0\|_X+\sup_{s\in[0,t]}\|g_{2\varepsilon_k,3}(s)\|_X\int_0^t\|S_{2\varepsilon_k,3}(t,s)\|ds\Big\}\\\nonumber
		&\leq me^{w_3T}K(2\mathbb{N})^{2p\varepsilon_k} +\frac{m}{w_3}e^{w_3 T} \, (3M_3 m_3^2e^{2w_3T}K^2 + K) \, (2\mathbb{N})^{2p\varepsilon_k}. 
		\end{align*}
		Thus, 
		\begin{equation}
		\label{L 2 epsilon k}
		L_{2\varepsilon_k} \leq a_1 \, e^{w_3T} K \, (2\mathbb{N})^{2p\varepsilon_k}, \quad  k\in\mathbb{N},
		\end{equation}
		where $a_1 = m + \frac{m}{w_3} (3M_3 m_3^2 e^{2w_3T} K +1)$.

In the second case, for  $\alpha= \varepsilon_k + \varepsilon_j$, $k\not = j$, $k,j\in \mathbb N$ from \eqref{h alfa} we obtain the form   $g_{\varepsilon_k + \varepsilon_j,3}(t) = 6 u_{\mathbf{0}}(t) \, u_{\varepsilon_k }(t) \, u_{\varepsilon_j }(t) + f_{\varepsilon_k + \varepsilon_j}(t)$ of the inhomogeneous part of \eqref{sol nehom}. By applying \eqref{ocena L_epsilon kub} and \eqref{ocena ih. uslova} 
it can be estimated as
	\[\begin{split}
	\sup_{s\in[0,t]}\|g_{\varepsilon_k+ \varepsilon_j,3}(s)\|_X &\leq 6M_3 L_{\varepsilon_k}  L_{\varepsilon_j} + \sup_{s\in[0,t]}\|f_{\varepsilon_k+ \varepsilon_j}(s)\|_X\\
	& \leq  6M_3 m_3^2e^{2w_3T}K^2(2\mathbb{N})^{p\varepsilon_k} (2\mathbb{N})^{p\varepsilon_j} + K (2\mathbb{N})^{p\varepsilon_k+ p \varepsilon_j}\\
	& \leq  (6M_3 m_3^2e^{2w_3T}K^2 + K) \, (2\mathbb{N})^{p(\varepsilon_k+ \varepsilon_j)}.
	\end{split}\]  
Then,  \eqref{sol nehom} combined with the previous estimate 	lead to 
	\begin{align*}
	L_{\varepsilon_k+ \varepsilon_j}&=\sup_{t\in[0,T]}\|u_{\varepsilon_k+ \varepsilon_j} (t)\|_X\\
	&\leq\sup_{t\in[0,T]}\Big\{\|S_{\varepsilon_k+ \varepsilon_j,3} (t,0)\|\| u_{\varepsilon_k+ \varepsilon_j}^0\|_X+\sup_{s\in[0,t]}\|g_{\varepsilon_k+ \varepsilon_j,3}(s)\|_X\int_0^t\|S_{\varepsilon_k+ \varepsilon_j,3}(t,s)\|ds\Big\}\\\nonumber
	&\leq me^{w_3T}K(2\mathbb{N})^{p(\varepsilon_k+ \varepsilon_j)} +\frac{m}{w_3}e^{w_3 T} \, (6M_3 m_3^2e^{2w_3T}K^2 + K) \, (2\mathbb{N})^{p(\varepsilon_k+ \varepsilon_j)}. 
	\end{align*}
	Then, we obtained
\begin{equation}
\label{ocena L eps_k + eps_j}
L_{\varepsilon_k+ \varepsilon_j} \leq a_2 \, e^{w_3T} K \, (2\mathbb{N})^{p(\varepsilon_k+ \varepsilon_j)}, \quad k,j\in\mathbb{N},\; k\not=j,
\end{equation}
where $a_2 = m + \frac{m}{w_3} (6M_3 m_3^2 e^{2w_3T} K +1)$.
Finaly, from \eqref{L 2 epsilon k} and \eqref{ocena L eps_k + eps_j} we obtain the estimate for all  $|\alpha|=2$
\begin{equation*}
\label{ocena Lalfa duzine 2}
L_\alpha \leq a_2 \, e^{w_3T} K \, (2\mathbb{N})^{p\alpha}.  
\end{equation*}
	
	For $|\alpha|>2$ we deal with  general  form of the inhomogeneous part of \eqref{sistem 3a} \[g_{\alpha,3}(t) = 3 u_{\mathbf{0}} \sum_{0<\beta< \alpha }  u_{\alpha-\beta} (t)  u_\beta(t) + 
	\sum_{0<\beta < \alpha } \sum_{0<\gamma< \beta }    u_{\alpha-\beta} (t) \, u_{\beta-\gamma} (t)  \,  u_\gamma(t) + f_\alpha(t), \quad t\in [0,T].\] 
	The solution to \eqref{sistem 3a} is of the form
	\begin{align*}&u_\alpha(t)= S_{\alpha,3}(t,0) u_\alpha^0 \\
	&+ \int_0^t S_{\alpha,3}(t,s) \Big( 3 u_{\mathbf{0}} \sum_{0<\beta< \alpha }  u_{\alpha-\beta} (t)  u_\beta(t) +
	&\sum_{0<\beta < \alpha } \sum_{0<\gamma< \beta }   u_{\alpha-\beta} (t)  u_{\beta-\gamma} (t)   u_\gamma(t) + f_\alpha(t)\Big) ds. \end{align*}
	We underline that in the  previous inductive steps, we obtained the estimates of $L_{\alpha-\theta}= \sup_{t\in[0,T]}\|u_{\alpha-\theta}(t)\|$ for all $\mathbf{0}<\theta < \alpha$.  Then, 
	\begin{equation}
	\begin{split}
	L_\alpha = \sup_{t\in[0,T]}&\|u_{\alpha}(t)\| \leq me^{\omega_3T} K(2\mathbb N)^{p\alpha} \\
	&+ \frac{m}{w_3} \Big(3M_3 \sum_{0<\beta< \alpha }  L_{\alpha-\beta}  L_\beta + 
	\sum_{0<\beta < \alpha } \sum_{0<\gamma< \beta }    L_{\alpha-\beta} L_{\beta-\gamma}    L_\gamma + K (2\mathbb N)^{p\alpha}\Big)\\
	&\leq m_3 e^{\omega_3T}  \Big(K(2\mathbb N)^{p\alpha} + 3M_3 \sum_{0<\beta< \alpha }  L_{\alpha-\beta}  L_\beta + 
	\sum_{0<\beta < \alpha } L_{\alpha-\beta} \, \sum_{0<\gamma< \beta }     L_{\beta-\gamma}  \,  L_\gamma \Big), \label{ocena Lalfa} 
	\end{split}
	\end{equation} where $m_3=m + \frac{m}{w_3}$.   \\
	In order to estimate $L_\alpha$ for $|\alpha|>2$ we consider two possibilities: (a) $L_\alpha \leq \sum\limits_{0<\beta < \alpha } L_{\alpha-\beta} \, L_\beta,$ $|\alpha|>2$ and (b) $L_\alpha > \sum\limits_{0<\beta < \alpha } L_{\alpha-\beta} \, L_\beta,\;|\alpha|>2.$
	
	\begin{enumerate}
	\item[(a)] Define $R_\alpha$ for $|\alpha|\geq 1$ in the following inductive way 
	\begin{equation*}
	\begin{split}
	R_{\varepsilon_k} &= L_{\varepsilon_k}\\
	R_\alpha &= \sum\limits_{0<\beta < \alpha } R_{\alpha-\beta} \, R_\beta, \quad |\alpha|\geq 2, 
	\end{split}
	\end{equation*}then, using Lemma \ref{multi katalan}, we obtain  the estimate 
	\begin{equation*}
	L_\alpha \leq R_\alpha = \frac1{|\alpha|} \,  \binom{2|\alpha|-2}{|\alpha|-1} \,\,  \frac{|\alpha|!}{\alpha!} \, 
\big(\prod_{i=1}^{\infty} \, R_{\varepsilon_i}^{\alpha_i} \big). 
	\end{equation*}
Moreover, by \eqref{ocena L_epsilon kub} we get 
	\begin{equation*}
	\begin{split}
	\prod_{i=1}^{\infty} \, R_{\varepsilon_i}^{\alpha_i} &= \prod_{i=1}^{\infty} \, L_{\varepsilon_i}^{\alpha_i}
\leq \prod_{i=1}^{\infty} \Big(  m_3 e^{\omega_3T} K\, (2\mathbb N)^{p \varepsilon_k}\Big)^{\alpha_i}  = 
	\big(  m_3 e^{\omega_3T} K \big)^{|\alpha|} \, \,  \prod_{i=1}^{\infty} \,  (2i)^{p \alpha_i} \\
	& = \big(  m_3 e^{\omega_3T} K \big)^{|\alpha|} \, \,  (2\mathbb N)^{p\alpha} = c_3^{|\alpha|} (2\mathbb N)^{p\alpha},
\end{split}
\end{equation*} where $c_3 = m_3 e^{\omega_3T} K$. We also used $\prod_{i=1}^{\infty} \,  (2i)^{p \alpha_i} =  (2\mathbb N)^{p\alpha} $
and $(2\mathbb N)^{\varepsilon_i} = 2i  $.
We recall the form of the   Catalan numbers $\mathbf{c}_{|\alpha|} = \frac1{|\alpha|}	\binom{2|\alpha|-2}{|\alpha|-1} $, $|\alpha|\geq 2$. Then, by Lemma \ref{ocena faktorijela} we obtain
\begin{align*}
L_\alpha  &\leq \frac1{|\alpha|}	\binom{2|\alpha|-2}{|\alpha|-1}  \, \frac{|\alpha|!}{\alpha!} \, \,  c_3^{|\alpha|} \, (2\mathbb N)^{p\alpha}
\leq 4^{|\alpha|-1} \, (2\mathbb N)^{2\alpha} \, c_3^{|\alpha|} \, (2\mathbb N)^{p\alpha}\\
& \leq (2\mathbb N)^{p_3\alpha} (2\mathbb N)^{(2+p)\alpha}, \nonumber
\end{align*} where we used that  $4^{|\alpha|-1} \, 	 c_3^{|\alpha|}\leq (2\mathbb N)^{p_3\alpha}$ for some positive $p_3$.  Thus, we conclude 
\begin{equation*}
\label{Lalfa case 1}
L_\alpha \leq (2\mathbb N)^{(p_3+p+2)\alpha}. 
 \end{equation*}
Finally, for $q>2p_3+ 2p +5$ the statement of the theorem follows  from  
	\begin{align}
	\label{b}
\sum_{\alpha\in\mathcal{I}}\sup_{t\in[0,T]}\|u_\alpha(t)\|^2_X(2\mathbb{N})^{-q\alpha} & = \sum_{\alpha\in\mathcal{I}} L_\alpha^2 (2\mathbb{N})^{-q\alpha} \nonumber\\
&=L_\mathbf{0}^2+\sum_{k\in \mathbb{N}}L_{\varepsilon_k}^2(2\mathbb{N})^{-q\varepsilon_k} + \sum_{|\alpha|>1}L_\alpha^2(2\mathbb{N})^{-q\alpha} \nonumber\\
	&\leq M_3^2+(m_3e^{w_3 T}K)^2 \sum_{k\in\mathbb{N}}(2\mathbb{N})^{(2p-q)\varepsilon_k} \nonumber\\
	&+ \sum_{|\alpha|>1}(2\mathbb{N})^{(2(p_3+p+2)-q)\alpha}<\infty, 
	\end{align}
 i.e. $u\in  C([0,T],X)\otimes (S)_{-1,-q}$. Note  that in \eqref{b} the term $\sum_{k\in\mathbb{N}}(2\mathbb{N})^{(2p-q)\varepsilon_k}$ is finite since   $q>2p+1$ when $q>2p_3+ 2p +5$.  
\item[(b)]  We assume, in the second case, that there exists $\alpha\in \mathcal I$, $|\alpha| \geq 2$ such that 
\begin{equation}
\label{a} 
L_\alpha > \sum\limits_{0<\beta < \alpha } L_{\alpha-\beta} \, L_\beta. 
\end{equation} 
Consider the most complicated case.  Then, we would have that  the inequality \eqref{a} is fulfilled for all $\alpha\in \mathcal I$.  Then, \eqref{ocena Lalfa} reduces to
\begin{equation*}
L_\alpha \leq m_3 e^{w_3T} \left(K(2\mathbb N)^{p\alpha} + (3M_3+1) \, \sum_{0< \beta< \alpha }     L_{\alpha-\beta}  \,  L_\beta\right),
\end{equation*}where we used inequality $L_{\beta}>\sum\limits_{0<\gamma<\beta } L_{\beta-\gamma} \, L_\gamma$ for $\beta < \alpha$.   Further, we have
\begin{equation*}
L_\alpha \leq  (3M_3+1) \,  m_3 e^{w_3T} \, \Big( \frac{K}{3M_3+1} \, (2\mathbb N)^{p\alpha} \, + \sum\limits_{0<\beta < \alpha } L_{\alpha-\beta} \, L_\beta  \Big), \quad |\alpha|\geq 2. 
\end{equation*}At this point, we can repeat the proof of Theorem \ref{T1}.  Particularly, using the notation $m_3'=(3M_3+1) \,  m_3$ and $K'=\frac{K}{3M_3+1},$ the following inequality 
\begin{align*} 
L_\alpha\leq m_3'e^{w_3T}\Big(K'(2\mathbb{N})^{p\alpha}+\sum_{\mathbf{0}<\beta<\alpha}L_{\alpha-\beta}L_\beta\Big) 
\end{align*}
corresponds to the inequality \eqref{ocena L_alpha}, since $K'<1,$ and the proof  continues in the same manner  as the one from Theorem \ref{T1}, i.e. the proof of solvability of the  equation \eqref{NLJ-2} with the Wick-square nonlinearity. 
	\end{enumerate}
	\end{proof} 

\begin{Remark}
Note here that if the almost classical solution $u$ to \eqref{NLJ} satisfies $u\in \mathbb{D}=Dom{\mathbf{A}}$ then $u$ is a classical solution to \eqref{NLJ}.
\end{Remark}

\subsection{The linear nonautonomous case}

Our analysis provides a downright observation for the linear nonautonomous equation
\begin{align}\label{LJ}
u_t (t, \omega)&= \mathbf A(t) \, u(t, \omega) + f(t,\omega), \quad t\in(0,T]\\\nonumber
u(0,\omega) &= u^0(\omega),\quad \omega\in\Omega. 
\end{align} 
We assume the following:
	\begin{enumerate}
		
		\item[(B1)] The operator $\mathbf A(t):\;\mathbb{D}'\subset X\otimes(S)_{-1}\to X\otimes(S)_{-1},\;t\in[0,T]$ is a coordinatewise operator depending on $t$ that corresponds to a family of deterministic operators $A_\alpha(t):D(A_\alpha)\subset X\to X,\;\alpha\in\mathcal{I}.$ For every $\alpha\in\mathcal{I}$ the operator family $\{A_\alpha(t)\}_{t\in[0,T]}$ is a stable family of infinitesimal generators of $C_0-$semigroups on $X$ with stability constants $m>1$ and $w\in\mathbb{R}$ not depending on $\alpha,$ therefore the  corresponding evolution systems $S_\alpha(t,s)$ satisfy
$$\|S_\alpha(t,s)\|\leq me^{w(t-s)}\leq me^{wT},\quad0\leq s<t\leq T,\quad \alpha\in\mathcal{I}.$$
   The domain $D(A_\alpha(t))=D$ is independent of $t\in[0,T]$ and $\alpha\in\mathcal{I}.$ For every $x\in D$ the function $A_\alpha(t)x,\;t\in[0,T]$ is continuously differentiable in $X$ for each $\alpha\in\mathcal{I}.$  
   
   The action of  $\mathbf A(t),\;t\in [0,T]$ is given by 
 		\[\mathbf A(t)(u)=\sum_{\alpha\in\mathcal I}A_\alpha(t)(u_\alpha)H_\alpha,\] for $u \in \mathbb{D}'\subseteq D\otimes (S)_{-1}$ of the form \eqref{proces},  where
 		\[\mathbb{D}'=\Big\{u=\sum_{\alpha\in\mathcal I}u_\alpha H_\alpha\in
 		D\otimes (S)_{-1}:\exists p_0\geq 0, \sum_{\alpha\in\mathcal
 			I}\sup_{t\in[0,T]}\|A_\alpha(t)(u_\alpha)\|^2_X(2\mathbb N)^{-p_0\alpha}<\infty\Big\}.\]
 			
 		\item[(B2)] The initial value $u^0=\sum_{\alpha\in\mathcal{I}}u^0_\alpha H_\alpha\in\mathbb{D}'$, i.e. $u_\alpha^0\in D$ for every $\alpha\in\mathcal{I}$ and there exists $p\geq 0$ such that 
 		\begin{equation*}
 		\sum_{\alpha\in \mathcal{I}}\|u_\alpha^0\|_X^2 (2\mathbb N)^{-p \alpha}<\infty, 
 		\end{equation*} 
 	\begin{equation*}
 	\sum_{\alpha\in \mathcal{I}}\sup_{t\in[0,T]}\|A_\alpha(t) u_\alpha^0\|_X^2 (2\mathbb N)^{-p \alpha}<\infty.
 	\end{equation*}
\end{enumerate} 
		
		For the inhomogeneous part $f(t,\omega),\;\omega\in \Omega,\;t\in[0,T]$ we assume (A3).

\begin{Theorem}\label{T2}
Let the assumptions $(B1),\;(B2)$ and $(A3)$ be fulfilled. 
Then there exists a unique almost classical solution $u\in C([0,T],X)\otimes (S)_{-1}$ to \eqref{LJ}.
\end{Theorem}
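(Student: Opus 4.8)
The plan is to reuse the chaos-expansion strategy of Theorems \ref{T1} and \ref{T3}, taking advantage of the crucial simplification that the linearity of \eqref{LJ} decouples the propagator system completely. Writing $u(t,\omega)=\sum_{\alpha\in\mathcal I}u_\alpha(t)H_\alpha(\omega)$ and $f(t,\omega)=\sum_{\alpha\in\mathcal I}f_\alpha(t)H_\alpha(\omega)$, and inserting the coordinatewise action of $\mathbf A(t)$ from $(B1)$, the orthogonality of $\{H_\alpha\}_{\alpha\in\mathcal I}$ reduces \eqref{LJ} to the family of mutually independent deterministic Cauchy problems
\begin{equation*}
\frac{d}{dt}u_\alpha(t)=A_\alpha(t)\,u_\alpha(t)+f_\alpha(t),\quad t\in(0,T];\qquad u_\alpha(0)=u_\alpha^0,\qquad \alpha\in\mathcal I.
\end{equation*}
In contrast to \eqref{sistem 3}, no Wick-product terms are present, so $u_\alpha$ does not depend on the lower coefficients $u_\beta$, $\beta<\alpha$. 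Consequently there is no need for induction on $|\alpha|$, nor for the Catalan-number machinery of Lemmas \ref{katalan} and \ref{multi katalan}.

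Next I would solve each coordinate equation by the nonautonomous linear theory recalled in the introductory section. By $(B1)$, for every $\alpha$ the family $\{A_\alpha(t)\}_{t\in[0,T]}$ is stable with common $t$-independent domain $D$ and with $t\mapsto A_\alpha(t)x$ continuously differentiable for each $x\in D$, while $(A3)$ guarantees $f_\alpha\in C^1([0,T],X)$. Hence \cite[Theorem 5.3, p.~147]{Pazy} applies and delivers a unique classical solution $u_\alpha\in C^1([0,T],X)$, given by the variation-of-constants formula
\begin{equation*}
u_\alpha(t)=S_\alpha(t,0)\,u_\alpha^0+\int_0^t S_\alpha(t,s)\,f_\alpha(s)\,ds,\quad t\in[0,T],
\end{equation*}
where $S_\alpha(t,s)$ is the evolution system associated with $\{A_\alpha(t)\}$. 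This already makes $u$ a coordinatewise classical solution, and, combined with the deterministic uniqueness for each coordinate and the orthogonality of the basis, it settles the uniqueness assertion at once.

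It then remains to verify convergence of $\sum_{\alpha}u_\alpha H_\alpha$ in $C([0,T],X)\otimes(S)_{-1}$, that is, $\sum_{\alpha\in\mathcal I}L_\alpha^2(2\mathbb N)^{-q\alpha}<\infty$ for some $q\geq 0$, with $L_\alpha=\sup_{t\in[0,T]}\|u_\alpha(t)\|_X$. Assuming without loss of generality $w>0$, the bound $\|S_\alpha(t,s)\|\leq me^{w(t-s)}$ from $(B1)$ and the resulting estimate $\int_0^t\|S_\alpha(t,s)\|\,ds\leq\frac{m}{w}e^{wT}$ give, directly from the formula above,
\begin{equation*}
L_\alpha\leq me^{wT}\|u_\alpha^0\|_X+\frac{m}{w}e^{wT}\sup_{t\in[0,T]}\|f_\alpha(t)\|_X,\qquad \alpha\in\mathcal I.
\end{equation*}
Squaring, using $(a+b)^2\leq 2a^2+2b^2$, and summing against $(2\mathbb N)^{-q\alpha}$, the right-hand side splits into two series that are controlled respectively by $(B2)$ and $(A3)$; choosing $q$ at least as large as the exponents $p$ furnished by those two assumptions renders both finite, which yields $u\in C([0,T],X)\otimes(S)_{-1,-q}$.

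I expect no genuine analytic obstacle here: the entire difficulty of the nonlinear arguments, namely controlling the convolution coupling $\sum_{\mathbf 0<\gamma<\alpha}L_\gamma L_{\alpha-\gamma}$ through the Catalan recursion, simply vanishes because the system is decoupled, so the convergence estimate is a one-line consequence of the semigroup bound. The only point demanding a little care is that the hypotheses of \cite[Theorem 5.3, p.~147]{Pazy} must hold \emph{uniformly in} $\alpha$; this is precisely why $(B1)$ postulates stability constants $m,w$ and the differentiability of $A_\alpha(t)x$ independently of $\alpha$, and why $(A3)$ requires $C^1$ (rather than merely continuous) dependence of $f$ on $t$.
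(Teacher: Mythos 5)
Your proposal is correct and follows essentially the same route as the paper: chaos expansion decouples \eqref{LJ} into the independent Cauchy problems \eqref{CP za LJ}, each solved by \cite[Theorem 5.3, p.~147]{Pazy} under $(B1)$, $(B2)$, $(A3)$, followed by the uniform evolution-system bound to get $L_\alpha\lesssim \|u_\alpha^0\|_X+\sup_t\|f_\alpha(t)\|_X$ and summability in some $(S)_{-1,-q}$. The only cosmetic differences are that the paper first converts $(B2)$ and $(A3)$ into pointwise bounds $K(2\mathbb N)^{p\alpha}$ and then takes $q>2p+1$, and bounds $\int_0^t\|S_\alpha(t,s)\|\,ds$ by $Tme^{wT}$ rather than $\frac{m}{w}e^{wT}$ (so it does not need $w>0$); neither affects correctness.
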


\begin{proof}
Applying the Wiener-It\^o chaos expansion method to \eqref{LJ} we obtain the system of infinitely many deterministic Cauchy problems
\begin{align}\label{CP za LJ}
\frac{d}{dt}u_\alpha(t)&=A_\alpha(t)u_\alpha(t)+f_\alpha(t),\quad t\in(0,T]\\
u_\alpha(0)&=u_\alpha^0,\quad \alpha\in\mathcal{I}.\nonumber
\end{align} 
By virtue of (B1), (B2) and (A3) the Cauchy problem \eqref{CP za LJ} fulfills all the assumptions of \cite[Theorem 5.3, p. 147]{Pazy} so there exists a unique classical solution $u_\alpha\in C^1([0,T],X)$ given by
$$u_\alpha(t)=S_\alpha(t,0)u_\alpha^0+\int_0^tS_\alpha(t,s)f_\alpha(s)ds,\quad t\in[0,T]$$
for all $\alpha\in\mathcal{I}.$

It remains to show that $u=\sum_{\alpha\in\mathcal{I}}u_\alpha H_\alpha\in C([0,T],X)\otimes (S)_{-1},$ i.e. that there exists $q>0$ such that $\sum_{\alpha\in \mathcal{I}}\sup_{t\in [0,T]}\|u_\alpha(t)\|_X^2(2\mathbb{N})^{-q\alpha}<\infty.$

Without loss of generality, we may assume that the constants $K,p>0$ are such that for all $\alpha\in \mathcal{I}$
\begin{align*}
\|u_\alpha^0\|_X&\leq K(2\mathbb{N})^{p\alpha}\\
\sup_{t\in[0,T]}\|f_\alpha(t)\|_X&\leq K(2\mathbb{N})^{p\alpha}.
\end{align*}
Now, for all $\alpha\in\mathcal{I},$ we obtain
\begin{align*}
\sup_{t\in[0,T]}\|u_{\alpha}(t)\|_X & \leq\sup_{t\in[0,T]}\left\{\|S_\alpha(t,0)\|\|u_\alpha^0\|_X+\int_0^t\|S_\alpha(t,s)\|\|f_\alpha(s)\|_Xds\right\}\\
& \leq \sup_{t\in[0,T]}\left\{\|S_\alpha(t,0)\|\|u_\alpha^0\|_X+\sup_{s\in[0,t]}\|S_\alpha(t,s)\|\|f_\alpha(s)\|_X\int_0^t ds\right\}\\
& \leq \sup_{t\in[0,T]}\left\{me^{wt}K(2\mathbb{N})^{p\alpha}+me^{wt}K(2\mathbb{N})^{p\alpha}t\right\}\\
& \leq (1+T)me^{wT}K(2\mathbb{N})^{p\alpha}.
\end{align*}
Finally, for $q>2p+1$ we obtain
\begin{align*}
\sum_{\alpha\in\mathcal{I}}\sup_{t\in[0,T]}\|u_\alpha(t)\|^2_X (2\mathbb{N})^{-q\alpha}\leq \left((1+T)me^{wT}K\right)^2\sum_{\alpha\in\mathcal{I}}(2\mathbb{N})^{(2p-q)\alpha}<\infty.
\end{align*}
\end{proof}



   
   \section{Extensions and applications}
  Our results can be extended to a far more general case of stochastic evolution equation of the form
   \begin{equation}
   \label{stoch -polynomial nonlinearity}
   \begin{split}
   u_t (t, \omega)&= \mathbf A \, u(t, \omega) + p_n^{\lozenge }(u(t, \omega))+f(t,\omega), \quad t\in (0,T]\\
   u(0,\omega) &= u^0(\omega), \qquad \omega\in\Omega,
   \end{split}
   \end{equation}
   with a Wick-polynomial type of nonlinearity 
   \begin{equation}
   p_n^{\lozenge }(u)= \sum_{k=0}^n \, \, a_k \,  u^{\lozenge k} = 
   a_0 + a_1 \, u +  a_{2} \, u^{\lozenge 2}    + a_{3} \, u^{\lozenge 3} + \dots a_n\, u^{\lozenge n} ,
   \label{Wick-polinom}
   \end{equation}where $a_n\not=0$ and $a_k$, $0\leq k \leq n$ are either constants or deterministic functions. Equation \eqref{stoch -polynomial nonlinearity} generalizes  equation \eqref{NLJ}
   and it can be solved by the very same method presented in the paper, provided that one stipulates that the corresponding deterministic version of \eqref{stoch -polynomial nonlinearity} has a solution and modifies assumption $(A4-n)$ correspondingly. Hence, we replace $(A4-n)$ with the following assumption:
   
   \begin{enumerate}
   	\item[(A4-pol-n)]  The Cauchy problem
   	\[\frac{d}{dt} u_{\mathbf{0}} (t) =  A_{\mathbf{0}} u_{\mathbf{0}}  (t) +   p_n (u_{\mathbf{0}}(t)) + f_\mathbf{0}(t) ,\quad t\in(0,T]; \quad u_{\mathbf{0}}(0) = u_{\mathbf{0}}^0,\]
   	has a classical solution $u_{\mathbf{0}}\in C^1([0,T],X)$, where  
   	 \begin{equation}
   	 p_n(u)= \sum_{k=0}^n \, \, a_k \,  u^{k} = 
   	 a_0 + a_1 \, u +  a_{2} \, u^{ 2}      + a_{3} \, u^{3} + \dots a_n\, u^{ n} ,
  \label{polinom}
   	 \end{equation}is a classical polynomial of degree $n$ corresponding to the Wick-polynomial \eqref{Wick-polinom}. 
   \end{enumerate}
   
   We extend Theorem \ref{main}, and  for the sake of technical simplicity,  present only a procedure for solving  \eqref{stoch -polynomial nonlinearity} for $n=3$, but note that the general case may be done mutatis mutandis. 
   
First we note that  from the form of the process \eqref{proces} and from the form of its Wick-powers \eqref{WP2}, as well as from \eqref{WP3} we obtain the expansion of the Wick-polynomial nonlinearity 
   \begin{equation}
   \begin{split}
  &p_3^{\lozenge }(u) =
   a_0 + a_1 \, u +  a_{2} \, u^{\lozenge 2}    + a_{3} \, u^{\lozenge 3} \\
   &= a_0 H_{\mathbf{0}} + a_1 \bigg( u_{\mathbf{0}} H_{\mathbf{0}} + \sum_{|\alpha|>0}  u_\alpha H_\alpha\bigg) + a_2 \bigg( u^2_{\mathbf{0}} H_{\mathbf{0}} +   \sum_{|\alpha|>0} \Big(2u_{\mathbf{0}} u_\alpha + \sum_{\mathbf{0}<\beta < \alpha} u_{\beta} u_{\alpha-\beta}\Big) \, H_\alpha \bigg) + \\
   & +
   a_3 \bigg( u^3_{\mathbf{0}} H_{\mathbf{0}} +  \sum_{|\alpha|>0}  \Big(3 u_{\mathbf{0}}^2 u_\alpha + 3  u_{\mathbf{0}} \sum_{0<\beta< \alpha }  u_{\alpha-\beta}  u_\beta + 
   \sum_{0<\beta < \alpha } \sum_{0<\gamma< \beta }    u_{\alpha-\beta}  u_{\beta-\gamma}   u_\gamma(t)\Big) \,  H_\alpha \bigg). 
   \label{p3 expansion}
\end{split}
   \end{equation}
   When summing up the corresponding coefficients, the expression \eqref{p3 expansion} transforms to 
   \begin{align*}
   p_3^{\lozenge }(u)&=
  (a_0 + a_1 u_{\mathbf{0}} +  a_{2} \, u^2_{\mathbf{0}}   + a_{3} \, u^3_{\mathbf{0}}) \, H_{\mathbf{0}}\\& +
  \sum_{\alpha>\mathbf{0}}\Big((3a_3 u^2_{\mathbf{0}} + 2 a_2 u_{\mathbf{0}} + a_1) \, u_\alpha + (3a_3 u_{\mathbf{0}} + a_2)  \sum_{0<\beta< \alpha }  u_{\alpha-\beta} u_\beta \\
  &+ a_3  \sum_{0<\beta < \alpha } \sum_{0<\gamma< \beta }    u_{\alpha-\beta}  u_{\beta-\gamma}   u_\gamma \Big) \, H_\alpha \\
  &= p_3(u_{\mathbf{0}}) + \sum_{\alpha> \mathbf{0}} \Big( p'_3(u_{\mathbf{0}}) u_\alpha + \frac{1}{2!} \cdot  p''_3(u_{\mathbf{0}})  \sum_{0<\beta< \alpha }  u_{\alpha-\beta} u_\beta \\
  &+ \frac1{3!} \cdot p'''_3(u_{\mathbf{0}})  \sum_{0<\beta < \alpha } \sum_{0<\gamma< \beta }    u_{\alpha-\beta}  u_{\beta-\gamma}   u_\gamma  \Big) \, H_\alpha ,
   \end{align*} where $p_3'$, $p_3''$ and $p_3'''$ denote the first, the second and the third derivative of the polynomial \eqref{polinom}, respectively. \\
   Thus, by applying the Wiener-It\^o chaos expansion method to  the nonlinear stochastic problem  \eqref{stoch -polynomial nonlinearity} we obtain 
    the system of infinitely many deterministic Cauchy problems: 
   \begin{enumerate}
   	\item[$1^\circ$] for $\alpha =\mathbf{0}$ 
   	\begin{equation}
   	\label{nulta jed-opste}
   	\frac{d}{dt} u_{\mathbf{0}} (t) =  A_{\mathbf{0}} u_{\mathbf{0}}  (t) +  p_3(u_{\mathbf{0}} (t)) +f_\mathbf{0}(t), \quad u_{\mathbf{0}}(0) = u_{\mathbf{0}}^0,  
   	\end{equation}
   	and 
   	\item[$2^\circ$] for  $\alpha >\mathbf{0}$
   	\begin{equation}
   	\label{sistem alfa -polinomna jed}
   	\begin{split}
   	\frac{d}{dt}u_\alpha (t) &=  \big( A_\alpha +  p_3'(u_{\mathbf{0}} (t)) Id \big) \,  u_\alpha(t)  +  \frac12 \, p_3''(u_{\mathbf{0}}(t))  \sum_{0<\beta< \alpha }  u_{\alpha-\beta} (t)  u_\beta(t) + \\	
   	&+ \frac16\, p_3'''(u_{\mathbf{0}}(t))   \sum_{0<\beta < \alpha } \sum_{0<\gamma< \beta }    u_{\alpha-\beta} (t) u_{\beta-\gamma} (t)    u_\gamma(t) + f_\alpha(t), \\
   	u_\alpha (0) &=  u_\alpha^0 .
   	\end{split}
   	\end{equation} with $t\in (0,T]$ and $\omega\in\Omega$. 
   \end{enumerate}
   
  We denote by 
   \begin{align*}
   B_{\alpha,p_{3}}(t) &= A_\alpha +  p_3'(u_{\mathbf{0}} (t)) Id \qquad  \text{and}  \\
   g_{\alpha,p_{3}} (t) &= \frac12 \cdot p_3''(u_{\mathbf{0}})  \sum_{0<\beta< \alpha }  u_{\alpha-\beta} (t)  u_\beta(t) \\
   &+ 
  \frac16 \cdot p_3'''(u_{\mathbf{0}})   \sum_{0<\beta < \alpha } \sum_{0<\gamma< \beta }    u_{\alpha-\beta} (t) u_{\beta-\gamma} (t) u_\gamma(t) + f_\alpha(t), \, \label{h alfa1}
   \end{align*}
   for $t\in(0,T]$ and all $\alpha > \mathbf{0}$. Hence, the problems \eqref{sistem alfa -polinomna jed} for $\alpha > \mathbf{0}$
   can be written in the form 
   \begin{equation}
   \label{sistem opste}
   \begin{split}
   \frac{d}{dt}u_\alpha (t) &=  B_{\alpha,p_3}(t)  \, u_\alpha(t) +  g_{\alpha,p_3}(t), \quad t\in(0,T] \\ 
   u_\alpha (0) &=  u_\alpha^0 .  
   \end{split}
   \end{equation}

   \begin{Theorem}\label{T4} Let the assumptions $(A1)-(A3)$ and $(A4-pol-3)$ be fulfilled. 
   	Then, there exists a unique almost classical solution $u\in C([0,T],X)\otimes (S)_{-1}$ to \eqref{stoch -polynomial nonlinearity}.
   \end{Theorem}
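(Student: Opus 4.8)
The plan is to reproduce the architecture of the proofs of Theorems \ref{T1} and \ref{T3}, exploiting the crucial observation that the propagator system \eqref{nulta jed-opste}--\eqref{sistem opste} has \emph{exactly} the convolution structure of the cubic system \eqref{sistem 3a}: the only change is that the constant coefficients $3u_{\mathbf 0}^2$, $3u_{\mathbf 0}$, $1$ are replaced by the bounded functions $p_3'(u_{\mathbf 0}(t))$, $\tfrac12 p_3''(u_{\mathbf 0}(t))$, $\tfrac16 p_3'''(u_{\mathbf 0}(t))$. Since these are polynomials in $u_{\mathbf 0}$ and $X$ is a Banach algebra, they introduce no new analytic difficulty, so the whole problem reduces to the already-settled case $n=3$.

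First I would invoke assumption (A4-pol-3) to obtain the classical solution $u_{\mathbf 0}\in C^1([0,T],X)$ of \eqref{nulta jed-opste} and set $M=\sup_{t\in[0,T]}\|u_{\mathbf 0}(t)\|_X<\infty$. The Banach-algebra bound then gives uniform estimates $\|p_3'(u_{\mathbf 0}(t))\|_X\leq P_1$, $\|\tfrac12 p_3''(u_{\mathbf 0}(t))\|_X\leq P_2$ and $\|\tfrac16 p_3'''(u_{\mathbf 0}(t))\|_X\leq P_3$ on $[0,T]$, with $P_1,P_2,P_3$ expressible through $M$ and $|a_1|,|a_2|,|a_3|$. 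In particular $p_3'(u_{\mathbf 0}(t))\,Id$ is a uniformly bounded perturbation of $A_\alpha$, so the argument of Lemma \ref{Lema} applies verbatim: for every $\alpha>\mathbf 0$ the family $\{B_{\alpha,p_3}(t)\}_{t\in[0,T]}$ is a stable family of generators with common $t$-independent domain $D$, to which one associates an evolution system $S_{\alpha,p_3}(t,s)$ with $\|S_{\alpha,p_3}(t,s)\|\leq m e^{w_{p_3}(t-s)}$ for some $w_{p_3}=w+P_1 m$. Solving \eqref{sistem opste} recursively on $|\alpha|$ exactly as in Lemma \ref{Lema}, the variation-of-constants formula \eqref{sol nehom} (with $S_{\alpha,p_3}$ in place of $S_{\alpha,n}$) yields unique classical coordinates $u_\alpha\in C^1([0,T],X)$, since each $g_{\alpha,p_3}$ depends only on already-constructed lower coordinates $u_\beta$, $\beta<\alpha$.

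The main work is convergence of the chaos series in $C([0,T],X)\otimes(S)_{-1}$. Writing $L_\alpha=\sup_{t}\|u_\alpha(t)\|_X$ and estimating \eqref{sistem opste} as in the derivation of \eqref{ocena Lalfa}, one arrives at a recursive inequality
\begin{equation*}
L_\alpha \leq m' e^{w_{p_3} T}\Big(K(2\mathbb{N})^{p\alpha}
+ P_2\sum_{0<\beta<\alpha} L_{\alpha-\beta}L_\beta
+ P_3\sum_{0<\beta<\alpha}\sum_{0<\gamma<\beta} L_{\alpha-\beta}L_{\beta-\gamma}L_\gamma\Big),
\end{equation*}
for a constant $m'$, which is structurally identical to the cubic estimate \eqref{ocena Lalfa}. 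I expect the only obstacle to be bookkeeping: one must check that the mixed quadratic/cubic convolutions coming from the lower-order Wick terms $a_2 u^{\lozenge 2}$ and $a_1 u$ contribute no growth beyond the cubic term. This is resolved by the same dichotomy as in Theorem \ref{T3}. In the regime where $L_\alpha\leq\sum_{0<\beta<\alpha}L_{\alpha-\beta}L_\beta$, one bounds $L_\alpha$ directly by Catalan numbers through Lemmas \ref{multi katalan}, \ref{katalan} and \ref{ocena faktorijela}; in the complementary regime one absorbs the triple sum into the double sum via $\sum_{0<\gamma<\beta}L_{\beta-\gamma}L_\gamma<L_\beta$ for $\beta<\alpha$, thereby reducing the recursion to the Wick-square estimate \eqref{ocena L_alpha} of Theorem \ref{T1}. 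In either case Lemma \ref{lema ocena} supplies a level $q$ with $\sum_\alpha L_\alpha^2(2\mathbb{N})^{-q\alpha}<\infty$, giving $u\in C([0,T],X)\otimes(S)_{-1,-q}$; uniqueness is immediate from the coordinatewise uniqueness at every level.
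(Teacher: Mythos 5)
Your proposal is correct and follows essentially the same route as the paper: the authors likewise reduce Theorem \ref{T4} to Lemma \ref{Lema} for the coordinatewise existence (using that $p_3'(u_{\mathbf 0}(t))\,Id$ is a uniformly bounded perturbation of $A_\alpha$) and then to the convergence argument of Theorem \ref{T3}, replacing the constant $M_3$ by the maximum of the sup-norms of $p_3(u_{\mathbf 0})$ and its first three derivatives. Your write-up is in fact more detailed than the paper's own proof, which only sketches this reduction.
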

   \begin{proof}
   	Under the assumptions $(A1)-(A2)$ and the assumption  $(A4-pol-3)$ that \eqref{nulta jed-opste}  has a classical solution in $C^1([0,T], X)$, it  can be proven (similarly as it was done in Lemma \ref{Lema}) that for every $\alpha>\mathbf{0}$ the evolution system \eqref{sistem opste} has a unique classical solution $u_\alpha\in C^1([0,T],X)$.  Then, 
   	in order to show that $u$ is an almost classical solution to \eqref{stoch -polynomial nonlinearity},  one has to prove that $u\in C([0,T],X)\otimes (S)_{-1}$. Indeed, this can be done  
    in an analogue way as in the proof of Theorem \ref{T3}, with  	$L_{\mathbf{0}} =  \sup_{t\in[0,T]} \|u_{\mathbf{0}}(t)\| $ and $$M_3 =\max\{ \sup_{t\in [0,T]} \|p_3(u_{\mathbf{0}}(t))\|, \sup_{t\in [0,T]} \|p_3'(u_{\mathbf{0}}(t))\|, \sup_{t\in [0,T]} \|p_3''(u_{\mathbf{0}}(t))\|, \sup_{t\in [0,T]} \|p_3'''(u_{\mathbf{0}}(t))\|\}.$$   
   \end{proof}

   \subsection{Examples}
   We present two classes of stochasic reaction-diffusion equations that belong to the class of problems \eqref{stoch -polynomial nonlinearity}. 
   \subsubsection{Stochastic generalized FitzHugh-Nagumo equation}
   The nonlinear stochastic evolution equation 
   \begin{equation}
   \label{stoch FHN eq}
   \begin{split}
   u_t (t, \omega)&= \mathbf A \, u(t, \omega) + u^{\lozenge 2}(t, \omega) - u^{\lozenge 3}(t, \omega)+f(t,\omega), \quad t\in (0,T]\\
   u(0,\omega) &= u^0(\omega), \quad \omega\in\Omega,
   \end{split}
   \end{equation}which belongs to the class of generalized FitzHugh-Nagumo equations is an equation of type \eqref{stoch -polynomial nonlinearity}.  Particularly,  for $\mathbf A=\triangle$, the corresponding reaction-diffusion deterministic equation 
   \begin{equation}\label{determ FHN eq}
   u_t= \triangle u(t) + F(u(t)),  \quad u(0)=u^0, 
   \end{equation}  with a nonlinearity of the form $F(u)=-u(a-u)(b-u)$  is the celebrated FitzHugh-Nagumo equation, which arises  in  various models of neurophysiology. The equation \eqref{determ FHN eq} has been introduced by FitzHugh and Nagumo \cite{FH, Nagumo}  in order to model the conduction of electrical impulses in a nerve axon. A stochastic version of the FitzHugh-Nagumo equation \eqref{determ FHN eq} was studied in \cite{Albeverio}, while a control problem for  the FitzHugh-Nagumo equation perturbed by coloured Gaussian noise was solved in \cite{Barbu}. Clearly, the equation \eqref{stoch FHN eq} is generalizing \eqref{determ FHN eq} if we choose  $a=0$ and $b=1$ in the form of  $F(u)$. For the choice of $a=b=0$ the equation \eqref{stoch FHN eq} reduces to the Fujita type equation \eqref{NLJ}. 
   
    Here, by appying Theorem \ref{T4}, we obtain a unique almost classical solution of  the equation \eqref{stoch FHN eq}.

   \subsubsection{Stochastic generalized Fisher-KPP equation}
   The deterministic nonlinear equation of the form \eqref{determ FHN eq} with $F(u)=a u(1-u)$ is called the Fisher equation (also known as the Kolmogorov-Petrovsky-Piskunov equation). Such equations occur in  phase transition problems arising in   biology, ecology, plasma physics \cite{Fisher, KPP} etc. Particularly, such an equation  provides a deterministic model for the density of a population living in an environment with a limited carrying capacity. It also describes the wave progression of an epidemic outbreak or  the spread of an advantageous gene within a
   population. Other applications in medicine involve the modeling of cellular reactions to the introduction of toxins, voltage propagation through a nerve axon, and the process of epidermal wound healing \cite{Aronson}. In other research areas it has been also used to study flame propagation of fire outbreaks, and neutron flux in nuclear reactors. 
   
   Stochastic models that  include random effects due to some external (enviromental) noise  were studied in the framework of white noise analysis   \cite{HuangLiu}, where the authors proved the existence of the traveling wave solution. In the same setting, the stochastic KPP equation, i.e. 
   heat equations with semilinear potential and perturbation by a  multiplicative noise were considered in \cite{OksendalVZ}. Under suitable assumptions, by applying the It\^o calculus, existence of a unique strong  traveling wave solution was proven, and an implicit Feyman-Kac-like formula for the solution was presented. 
  Here we consider a generalized Wick-version of the stochastic Fisher-KPP equation 
   \begin{equation*}
   \label{stoch Fisher eq}
   \begin{split}
   u_t (t, \omega)&= \mathbf A \, u(t, \omega) + u(t, \omega) - u^{\lozenge 2}(t, \omega)+f(t,\omega), \quad t\in (0,T]\\
   u(0,\omega) &= u^0(\omega), \quad \omega\in\Omega,
   \end{split}
   \end{equation*}which can be solved by applying Theorem \ref{T4}. 
   
 \subsection{Conclusion}
 In this paper we have presented a methodology for solving stochastic evolution equations involving nonlinearities of Wick-polynomial type. However, the applications and extensions of the theory do not stop here. In place of the nonlinearity $u^{\lozenge 2}$, one might consider $u\lozenge u_x$ and with appropriate modifications solve the stochastic Burgers-type equation $u_t= u_{xx} + u\lozenge u_x +f$ or the stochastic KdV equation $u_t= u_{xxx} + u\lozenge u_x +f$, coalesced into the form $u_t=\mathbf A u+ u\lozenge u_x +f$. One can also replace the nonlinearity $u^{\lozenge n}$ by $u\lozenge |u|^{n-1}$, where the modulus of a complex-valued stochastic process is understood as $|u|=\sum_{\alpha\in\mathcal I}|u_\alpha|H_\alpha$, and find explicit solutions to the stochastic nonlinear Schr\"odinger equation  $(i\hbar)u_t=\Delta u + u\lozenge |u|^{n-1} +f$.


\section*{Acknowledgement}

The paper is supported by the following projects and grants: project 174024 financed by the Ministry of Education, Science and Technological Development of the Republic of Serbia, project 451-03-01039/2015-09/26 of the bilateral scientific cooperation between Serbia and Austria, Domus grant 4814/28/2015/HTMT provided by the Hungarian Academy of Sciences,  project 142-451-2384 of the Provincial Secretariat for Higher Education and Scientific Research and a research grant for Austrian graduates.


\begin{thebibliography}{99}






  
	 \bibitem{Albeverio}
	 Albeverio, S.,  Di Persio, L. S.: Some stochastic dynamical models in neurobiology: Recent developments. \emph{Europena Communications in Mathematical and Theoretical Biology} \textbf{14}, (2011), 44--53.
	 
	 \bibitem{Aronson}
	 Aronson, D., Weinberger, H.: Nonlinear diffusion in population genetics, combustion and nerve propagation. In J. A. Goldstein, editor, Partial Differential Equations and Related Topics, number 466 in Lecture Notes in Mathematics. \emph{Springer--Verlag}, New York, 1975. 
	 
	 \bibitem{Barbu}
	 Barbu, V., Cordoni, F., Di Persio, L.S.: Optimal control of stochastic FitzHugh--Nagumo equation. \emph{International Journal of Control} \textbf{89}(4), (2016), 746--756. 
	 
	  \bibitem{Fisher}
	  Fisher, R.A.: The wave of advance of advantageous genes. \textit{Ann. 
	  Eug.} \textbf{7}, (1937), 353--369.
	  
	  \bibitem{FH}
	  FitzHugh, R.: Impulses and physiological states in theoretical models of nerve membrane. \emph{Biophysical Journal} \textbf{1}, (1961), 445--466.
	  
	\bibitem{Fujita1}
Fujita, H., Watanabe, S.:  
On the uniqueness and non-uniqueness of solutions of initial value problems for some quasi-linear parabolic equations. 
\emph{Comm. Pure. Appl. Math} \textbf{21}, (1968), 631--652. 
		
	\bibitem{Fujita2}
Fujita, H., Chen, Y. G.: 
On the set of blow-up points and asymptotic behaviours of blow-up solutions to a semilinear parabolic equation. 
\emph{Analyse math\'ematique et applications}, 181--201, Gauthier--Villars, Montrouge, 1988. 
	
	\bibitem{Hida}
Hida, T., Kuo, H.-H., Pothoff, J., Streit, L.: 
White Noise. An Infinite-dimensional Calculus.
\textit{Kluwer Academic Publishers Group}, Dordrecht, 1993. 

	\bibitem{HOUZ}
Holden, H., \O ksendal, B., Ub\o e, J., Zhang, T.: 
Stochastic Partial Differential Equations. A Modeling, White Noise Functional Approach. Second Edition.
\emph{Springer}, New York, 2010. 


\bibitem{HuangLiu}
Huang, Z., Liu, Z.: Stochastic travelling wave solution to stochastic generalized KPP equation. \emph{Nonlinear Differ. Equ. Appl.} \textbf{22}, (2015), 143--173. 


	\bibitem{KL}
Kaligotla, S.; Lototsky, S. V.:
Wick product in the stochastic Burgers equation: a curse or a cure?
\emph{Asymptot. Anal.} \textbf{75}(3-4), (2011), 145--168. 

\bibitem{23}
Kato, T.:
Linear evolution equations of "hyperbolic" type. II.
\emph{J. Math. Soc. Japan} \textbf{25}, (1973), 648--666. 
	
	 \bibitem{KPP}
	 Kolmogorov, A.,  Petrovskii, I., Piskunov, N.: 
	Study of the diffusion equation with increase in the amount of substance and its application to a biological problem. \emph{ Bull. State Univ. Mos.} \textbf{1}, (1937), 1--25. 
	 
	
	\bibitem{Milica} 
Levajkovi\'c, T., Pilipovi\' c, S., Sele\v si, D., \v Zigi\' c, M.: 
Stochastic evolution equations with multiplicative noise.
\emph{Electron. J. Probab.} \textbf{20}(19), (2015), 1--23. 

\bibitem{Meneses}
Meneses, R., Quaas, A.: Fujita type exponent for fully nonlinear parabolic equations and existence results, 
\emph{Journal of Mathematical Analysis and Applications}
\textbf{376}(2), (2011), 514--527. 


	\bibitem{Mikulevicius}
Mikulevicius, R., Rozovskii, B.: On unbiased stochastic Navier-Stokes equations. \emph{Probab. Theory Related Fields}
\textbf{154}, (2012), 787--834. 

 \bibitem{Nagumo}
 Nagumo, J., Arimoto, S., Yoshizawa, S.: 
An active pulse transmission line simulating nerve axon. \emph{Proceedings of the Institute of Radio Engineers} \textbf{50}(10), (1962), 2061--2070. 
 
	\bibitem{NZ} 
Neidhardt, H., Zagrebnov, V. A.: Linear non-autonomous Cauchy problems and evaluation semigroups. \emph{Advan. Diff. Equat.} \textbf{14}, (2009), 289--340. 


\bibitem{OksendalVZ}
	{\O}ksendal, B., V{\aa}ge, G., Zhao, H. Z.:
Asymptotic properties of the solutions to stochastic KPP equations. 
\emph{Proc. Roy. Soc. Edinburgh Sect. A} \textbf{130}(6), (2000), 1363--1381. 

	\bibitem{Pazy}
Pazy, A.:  
Semigroups of Linear Operators and Applications to Partial Differential Equations.
Applied Mathematical Sciences  {\bf 44},  \emph{Springer--Verlag}, New York, 1983. 

	\bibitem{GRPW}
Pilipovi\'c, S., Sele\v si, D.: 
Expansion theorems for generalized random processes, Wick products and applications to stochastic differential
equations.
\emph{Infin. Dimens. Anal. Quantum Probab. Relat. Top.} \textbf{10}(1), (2007), 79--110. 

	\bibitem{ps}
Pilipovi\'c, S., Sele\v si, D.:  
On the generalized stochastic Dirichlet problem - Part I: The stochastic weak maximum principle.
\emph{Potential Anal.} \textbf{32}, (2010), 363--387. 

\bibitem{cat}
Stanley, R.P.: Catalan Numbers.\emph{ Cambridge University Press}, New York, 2015. 

	\bibitem{51}
Yosida, K.:  
Time dependent evolution equations in a locally convex space.
\emph{Math. Ann.} \textbf{162}, (1965/1966), 83--86. 

	\bibitem{FixP}
Zeidler, E.:  
Nonlinear functional analysis and its applications. I. Fixed-point theorems.
\emph{Springer-Verlag}, New York, 1986. 

\end{thebibliography}
\end{document}